\definecolor{vert}{RGB}{13,75,47}
\definecolor{gris}{RGB}{128,128,128}
\definecolor{bleu}{RGB}{0,50,150}
\definecolor{rouge}{RGB}{162,30,2}
\crefname{equation}{}{}
\title{The weight filtration on real singular homology is motivic}
\author{Raphaël Ruimy}
\date{}
\theoremstyle{plain}
\newtheorem{theorem}{Theorem}
\newtheorem{proposition}[theorem]{Proposition}
\newtheorem*{theorem*}{Theorem}
\newtheorem*{prop*}{Proposition}
\newtheorem{lemma}[theorem]{Lemma}
\newtheorem{corollary}[theorem]{Corollary} 
\theoremstyle{definition}
\newtheorem{definition}[theorem]{Definition}
\newtheorem*{definition*}{Definition}
\crefname{hyp}{Hypothesis}{Hypotheses}
\newtheorem{constr}[theorem]{Construction}
\theoremstyle{remark}
\newtheorem{rem}[theorem]{Remark}
\newtheorem*{rem*}{Remark}
\newtheorem{ex}[theorem]{Example}
\numberwithin{theorem}{section}
\numberwithin{equation}{section}
\newcommand{\C}{\mathbb{C}}
\newcommand{\R}{\mathbb{R}}
\newcommand{\Q}{\mathbb{Q}}
\newcommand{\Z}{\mathbb{Z}}
\newcommand{\Spec}{\mathrm{Spec}}
\newcommand{\HH}{\mathrm{H}}
\newcommand{\D}{\mathrm{D}}
\newcommand{\ocal}{\mathcal{O}}
\newcommand{\ecal}{\mathcal{E}}
\newcommand{\scal}{\mathcal{S}}
\newcommand{\ccal}{\mathcal{C}}
\newcommand{\dcal}{\mathcal{D}}
\newcommand{\mc}{\mathcal}
\newcommand{\mb}{\mathbb}
\newcommand{\Ind}{\mathrm{Ind}}
\newcommand{\colim}{\mathrm{colim}}
\newcommand{\Hom}{\mathrm{Hom}}
\newcommand{\F}{\mathbb{F}}
\newcommand{\hcal}{\mathcal{H}}
\DeclareMathOperator{\sHom}{\mathscr{H}\text{\kern -3pt {\calligra\large om}}\,}
\newcommand{\Map}{\mathrm{Map}}
\newcommand{\map}{\mathrm{map}}
\newcommand{\Fun}{\mathrm{Fun}}
\newcommand{\CAlg}{\mathrm{CAlg}}
\newcommand{\DM}{\mathrm{DM}}
\newcommand{\Sh}{\mathrm{Sh}}
\newcommand{\Sch}{\mathrm{Sch}}
\newcommand{\op}{\mathrm{op}}
\newcommand{\catinfty}{\mathrm{Cat}_\infty}
\newcommand{\PSh}{\operatorname{\mathrm{PSh}}}
\newcommand{\Sm}{\mathrm{Sm}}
\newcommand{\AAA}{\mathbb{A}^1}
\newcommand{\eff}{\mathrm{eff}}
\newcommand{\rmm}{\mathrm{M}}
\begin{document}

\maketitle

\begin{abstract}
    We give an alternative construction of Totaro's weight filtration on singular homology of the real points of a real algebraic variety. Our construction shows that this filtration comes from Bondarko's weight filtration on Voevodsky motives. 
\end{abstract}
\tableofcontents
\section*{Intoduction}
Deligne introduced in \cite{hodgeII,hodgeIII} a canonical filtration on the singular cohomology groups with $\Q$-coefficients of a complex algebraic variety called the \emph{weight filtration}. He also introduced a weight filtration on the $\ell$-adic cohomology groups in \cite{WeilII}. The existence of both filtrations was initially envisioned by Grothendieck as a shadow of a motivic weight filtration. This vision was later justified by several authors. First, Voevodsky constructed in \cite[Chapter~5]{orange} the category $\DM_\mathrm{gm}(k,\Lambda)$ for $k$ a perfect field and $\Lambda$ any commutative ring (see \Cref{constrDMgm}). Assume that the characteristic exponent $p$ of $k$ is invertible in $\Lambda$. In \cite{thesekelly}, Kelly constructed\footnote{following Voevodsky's construction is the case when $k$ allows resolutions of singularities.} a functor 
\[M\colon \Sch_k\to \DM_\mathrm{gm}(k,\Lambda)\] from the category $\Sch_k$ of $k$-schemes of finite type. The objects of $\DM_\mathrm{gm}(k,\Lambda)$ are furthermore equipped with a weight filtration using Bondarko's work \cite{bondarkoZ[1/p]-motivic}. It is then possible to show that Ayoub's Betti realization 
\[\rho_\C\colon \DM_\mathrm{gm}(\C,\Q)\to \D^b(\Q)\] from \cite{ayobetti} sends the motive $M(X)$ to the singular chain complex $C_*(X(\C),\Q)$ and that the weight filtration on $M(X)$ is sent to the weight filtration on singular homology. A similar result is true for the étale and $\ell$-adic realizations from \cite{ayo14,em} and Deligne's weight filtration on $\ell$-adic complexes.

Our goal is to tell the same story in the real world. The existence of a weight filtration on the singular cohomology of the real points of a real algebraic variety with coefficients in the field $\F_2$ with two elements was stated by Totaro in \cite{totaro} and was later proved in \cite{mccrory1} by McCrory and Parusi{\'n}ski. In the case of Borel-Moore homology, their construction consists of formally extending the trivial filtration on projective smooth $\R$-schemes using an extension result of Guillén and Navarro Aznar \cite{Guillen-NA}; their construction for ordinary homology in \cite{mccrory2} is more involved as it requires constructing Gysin maps by hand. 
The goal of this note is to give an alternative construction of this weight filtration by using the weight filtration on motives. This will also show that the weight filtration is a realization of the motivic weight filtration. 

The first step is to construct the real motivic realization functor \[\rho_\R\colon \DM_\mathrm{gm}(\R,\F_2)\to \D^b(\F_2).\] As singular cohomology of the real points is not an étale-local invariant, the construction is more involved than the constructions in the complex case or for $\ell$-adic cohomology: in the étale world, every sheaf has canonical transfers which is not the case for Nisnevich sheaves. We must first adapt Robalo's universal property of motivic sheaves from \cite{rob2} to the setting of Voevodsky motives. This requires some $\infty$-categorical constructions which are done in \Cref{inftycatcons}. The universal property is then \Cref{universalprtyDM}. Constructing the realization then amounts to constructing a symmetric monoidal additive functor 
\[\Sm^\mathrm{cor}(\R,\F_2)\to \D^b(\F_2)\]
where $\Sm^\mathrm{cor}(\R,\F_2)$ is the category of finite correspondences over smooth $\R$-schemes with $\F_2$-coefficients (see \Cref{defcorr}). This functor must factor the usual singular chain complex functor $C_*(-(\R),\F_2)$. There is a fairly natural way to assign to an object (resp. a map) in $\Sm^\mathrm{cor}(\R,\F_2)$ an object (resp. a map) in $ \D^b(\F_2)$ (see \Cref{concretedescription}) but with this naive description, it is very hard to tell why the assignment is indeed a functor as checking compatibility with composition seems very hard. To avoid this issue, we use Jacobson's theorem (\Cref{jacobson}) which identifies real cohomology with Zariski cohomology of a certain sheaf $\overline{\mc{I}}^\infty:=\colim_m  \ \overline{\mc{I}}^m$ which is defined from the powers of the fundamental ideal of the Witt ring (see \Cref{constrjacmap}). We then use the existence of a Gersten resolution for $\overline{\mc{I}}^m$ to conclude. This latter point is in fact very subtle as it relies on the Milnor conjecture for K-theory which was proved in \cite{ovv}. On Chow motives, our construction recovers that of Fu in \cite{furealreal} (see \Cref{furealismyreal}).

Using Bondarko's theory of weight structures and its interpretation by Sosnilo (and Aoki in the monoidal context) in \cite{sosnilonegativeKtheory,aoki} (see \Cref{aokiji}), we then show that $\rho_\R$ factors through the filtered bounded derived category $\D^b\mathrm{F}(\F_2)$, more explicitly, we construct a \emph{filtered real realization functor} $\rho_\R^\mc{W}\colon \mathrm{DM}_\mathrm{gm}(\R,\F_2)\to\D^b\mathrm{F}(\F_2)$ such that the composition 
\[\mathrm{DM}_\mathrm{gm}(\R,\F_2)\xrightarrow{\rho_\R^\mc{W}}\D^b\mathrm{F}(\F_2)\xrightarrow{\varphi}\D^b(\F_2),\] with $\varphi$ the forgetful functor, is $\rho_\R$. On Chow motives, the filtration we obtain is the canonical filtration. 
To get the weight filtration on the singular homology of the real points of a real algebraic variety $X$, we simply apply $\rho_\R^\mc{W}$ to $M(X)$; which yields a filtered complex and therefore a spectral sequence whose abutment gives the weight filtration. By \Cref{mainthm}, this is an alternative construction of the weight spectral sequence and of the weight filtration on real singular homology of  \cite{mccrory1,mccrory2} and shows that is of motivic nature. We also give similar constructions for Borel-Moore homology, cohomology and cohomology with compact support.



\subsection*{Acknowledgments}
I would first like to thank Fabien Priziac for the very neat introduction to the weight filtration on real singular homology that he delivered at the workshop \emph{Real Motivic Geometry} in Le Croisic; the question of whether this weight filtration came from Bondarko's seemed very natural after hearing his presentation. I thank Erwan Brugallé, Jean-Baptiste Campesato, Adrien Dubouloz and Penka Georgieva for organizing this very nice workshop. 

I would also like to thank Frédéric Déglise for finding a gap in the definition of the real realization regarding composition of correspondences and to thank him and Jean Fasel for ideas on how to fix it. I thank Martin Gallauer for pointing out a very embarrassing $\infty$-categorical mistake. I thank Swann Tubach for proofreading the draft of this note and finding some mistakes and improvements. Finally, I would like to thank Adrien Dubouloz, Niels Feld, Lie Fu, Goulwenn Fichou, Samuel Lerbet and Massimo Pippi for their interst in this work and some great discussions. I also thank the ANR-21-CE40-0015 HQDIAG for supporting this project, as well as the University of Milan and the University of Grenoble where this research was conducted.

\section{Higher categorical constructions}\label{inftycatcons}

Let $\catinfty$ be the (large) $\infty$-category of small $\infty$-categories and $\rmm\catinfty\coloneqq\CAlg(\catinfty^\times)$ the $\infty$-category of small symmetric monoidal $\infty$-categories. 
\begin{definition}
    Let $\mathscr{K}$ be a small collection of simplicial sets. We let $\rmm\catinfty^\mathscr{K}$ be the subcategory of $\rmm\catinfty$ spanned by 
    \begin{enumerate}
        \item those small symmetric monoidal $\infty$-categories which admit $\mathscr{K}$-indexed colimits and whose tensor product preserves $\mathscr{K}$-indexed colimits separately in each variable,
        \item those symmetric monoidal $\infty$-functors which preserve $\mathscr{K}$-indexed colimits.
    \end{enumerate}
\end{definition}

\begin{rem}
    If $\mathscr{K}$ is empty, we recover $\rmm\catinfty$.
\end{rem}

We will denote by $\Sigma$ (resp. $\mathrm{rex}$\footnote{$\Sigma$ stands for "sum" and $\mathrm{rex}$ stands for "right exact".}) the collection of finite discrete (resp. finite) simplicial sets so that $\Sigma$-indexed (resp. $\mathrm{rex}$-indexed) colimits are finite sums (resp. finite colimits).
We also denote by $\mathrm{MAdd}\subseteq\rmm\catinfty^\Sigma$ the category of small additive symmetric monoidal (ordinary) categories. 
\begin{constr}
    Let $\mathrm{A}^\otimes\in \mathrm{MAdd}$, the category $\mathrm{C}^b_{\geqslant 0}(\mathrm{A})$ of bounded non-negative chain complex has a natural structure of monoidal dg-category $\bm{\mathrm{C}^b_{\geqslant 0}(\mathrm{A})^\otimes}$, the complex of maps between two chain complexes in $\mathrm{A}$ being given by 
\[\Map(C_*,D_*)_p=\prod_{n\in \Z} \Hom(C_n, D_{n+p})\]
with differentials $df= d\circ f - (-1)^p f\circ d$ and monoidal product given by \[C_*\otimes D_*=\bigoplus_{n\in \Z} (C_{*+n}\otimes D_{*-n})\] on objects.
\end{constr}  
\begin{constr}\label{doldkan}
    The Dold-Kan equivalence 
\[\mathrm{DK}\colon \mathrm{C}_{\geqslant 0}(\mathrm{Ab})\to \Fun(\Delta^\op,\mathrm{Ab})\]
can be promoted into a right-lax monoidal functor (see \cite[Example~1.2.3.26]{ha}). This allows to associate a symmetric monoidal simplicial category to any monoidal dg-category (see also \cite[Construction~1.3.1.13]{ha}): indeed let $\bm{\mathrm{C}^\otimes}$ be a symmetric monoidal dg-category, we let $\bm{\mathrm{C}}_\Delta^\otimes$ be the symmetric monoidal simplicial category such that
\begin{enumerate}
    \item The objects of $\bm{\mathrm{C}}_\Delta^\otimes$ are the objects of $\bm{\mathrm{C}}^\otimes$,
    \item If $M$ and $N$ are objects of $\bm{\mathrm{C}^\otimes}$, we set \[\Map_{\bm{\mathrm{C}}_\Delta^\otimes}(M,N)\coloneqq\mathrm{DK}(\tau_{\geqslant 0}(\Map_{\bm{\mathrm{C}^\otimes}}(M,N))),\]
    \item Composition is well-defined because the functor $\mathrm{DK}\circ\tau_{\geqslant 0}$ is right-lax monoidal,
    \item The tensor product on object is given by the tensor product of $\bm{\mathrm{C}^\otimes}$ and is defined on maps by using the right-lax monoidal constraints. 
\end{enumerate}
To $\bm{\mathrm{C}}^\otimes_\Delta$, we can associate a simplicial coloured operad by \cite[Variant~2.1.1.3]{ha} to which we can associate a symmetric monoidal $\infty$-category using \cite[Definition~2.1.1.23]{ha} that we denote by $\mathrm{N}^\otimes_\mathrm{dg}(\bm{\mathrm{C}^\otimes})$. This construction is functorial in $\bm{\mathrm{C}^\otimes}$.
\end{constr}
\begin{definition}
Let $\mathrm{A}^\otimes\in \mathrm{MAdd}$, we let 
\[\mathrm{K}^b_{\geqslant 0}(\mathrm{A})^\otimes=\mathrm{N}^\otimes_\mathrm{dg}(\bm{\mathrm{C}^b_{\geqslant 0}(\mathrm{A})^\otimes}).\] 
\end{definition}
\begin{rem}
    The underlying $\infty$-category of $\mathrm{K}^b_{\geqslant 0}(\mathrm{A})^\otimes$ is the differential-graded nerve of the dg-category $\bm{\mathrm{C}^b_{\geqslant 0}(\mathrm{A})}$. Hence, it is the localization $\mathrm{K}^b_{\geqslant 0}(\mathrm{A})$ of $\mathrm{C}^b_{\geqslant 0}(\mathrm{A})$ with respect to chain-homotopy equivalences by \cite[Remark~2.9]{cisinski-bunke2}.
\end{rem}
Let $\mathrm{A}^\otimes\in \mathrm{MAdd}$. We have a symmetric monoidal dg-functor
\[\bm{\mathrm{A}^\otimes} \to \bm{\mathrm{C}^b_{\geqslant 0}(\mathrm{A})^\otimes}\] sending an object to itself seen as a complex placed in degree $0$ (the dg-structure of $\bm{\mathrm{A}^\otimes}$ is given by the additive structure placed in degree $0$). By functoriality of \Cref{doldkan}, we get a symmetric monoidal $\infty$-functor 
\[\eta_{\mathrm{A}^\otimes}\colon \mathrm{A}^\otimes\to \mathrm{K}^b_{\geqslant 0}(\mathrm{A})^\otimes.\]

Recall from \cite[Remark~4.8.1.8]{ha} that if $\mathscr{K}\subseteq \mathscr{K}'$ the inclusion $\rmm\catinfty^{\mathscr{K}'}\hookrightarrow\rmm\catinfty^\mathscr{K}$ admits a left adjoint 
\[\mc{P}^{\mathscr{K}'}_\mathscr{K}\colon \rmm\catinfty^\mathscr{K}\to\rmm\catinfty^{\mathscr{K}'}\]
which sends a symmetric monoidal $\infty$-category $\ccal^\otimes$ that admits $\mathscr{K}$-indexed colimits to the subcategory of $\Fun_\mathscr{K}(\ccal^\op,\scal)^\otimes$ (the category of $\mathscr{K}$-indexed preserving functors from $\ccal^\op$ to the category $\scal$ of spaces, the monoidal structure being given by Day convolution) made of those objects generated by representables under $\mathscr{K}'$-indexed colimits.

\begin{proposition}\label{universalKb}
    The unit of the adjunction between $\mc{P}^\mathrm{rex}_\Sigma$ and the inclusion applied to $\mathrm{A}^\otimes\in \mathrm{MAdd}$ induces the functor $\eta_{\mathrm{A}^\otimes}$ up to a unique equivalence on the target. In particular, if $\ccal^\otimes$ belongs to $\rmm\catinfty^\mathrm{rex}$, the functor $\eta_{\mathrm{A}^\otimes}$ induces an equivalence \[\Fun_{\rmm\catinfty^\mathrm{rex}}(\mathrm{K}^b_{\geqslant 0}(\mathrm{A})^\otimes,\ccal^\otimes)\to \Fun_{\rmm\catinfty^\Sigma}(\mathrm{A}^\otimes,\ccal^\otimes).\]
\end{proposition}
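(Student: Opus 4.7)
The plan is to show that the pair $(\mathrm{K}^b_{\geqslant 0}(\mathrm{A})^\otimes, \eta_{\mathrm{A}^\otimes})$ satisfies the universal property of the unit of the adjunction from \cite[Remark~4.8.1.8]{ha} applied to $\mathrm{A}^\otimes$, from which both assertions of the proposition follow formally. This unfolds in three steps: verify that $\mathrm{K}^b_{\geqslant 0}(\mathrm{A})^\otimes$ lies in $\rmm\catinfty^\mathrm{rex}$ and that $\eta_{\mathrm{A}^\otimes}$ is a morphism in $\rmm\catinfty^\Sigma$; use the universal property of $\mc{P}^\mathrm{rex}_\Sigma$ to produce a comparison morphism $\psi \colon \mc{P}^\mathrm{rex}_\Sigma(\mathrm{A}^\otimes) \to \mathrm{K}^b_{\geqslant 0}(\mathrm{A})^\otimes$ in $\rmm\catinfty^\mathrm{rex}$; prove that $\psi$ is an equivalence.

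For the first step, finite coproducts in $\mathrm{K}^b_{\geqslant 0}(\mathrm{A})$ are computed termwise using the additive structure of $\mathrm{A}$, and pushouts are given by mapping cones of chain complexes, which remain bounded non-negative. The tensor product of bounded non-negative chain complexes is a dg-bifunctor which visibly commutes with termwise direct sums and with mapping cones, so it preserves finite colimits in each variable. The functor $\eta_{\mathrm{A}^\otimes}$ is symmetric monoidal by the functoriality of \Cref{doldkan} and preserves finite sums, since the direct sum of objects concentrated in degree $0$ is again concentrated in degree $0$. The second step is then a direct application of the universal property of $\mc{P}^\mathrm{rex}_\Sigma$.

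Essential surjectivity of $\psi$ is straightforward: every $C_\bullet \in \mathrm{K}^b_{\geqslant 0}(\mathrm{A})$ is built from its brutal truncations $C_{\leqslant n}$ via iterated cofiber sequences $C_{\leqslant n-1} \to C_{\leqslant n} \to C_n[n]$, and each $C_n[n]$ for $C_n \in \mathrm{A}$ is obtained from $C_n$ by $n$ successive pushouts of the form $0 \leftarrow X \to 0$; the essential image of $\psi$ contains $\mathrm{A}$ and is closed under finite colimits, hence exhausts $\mathrm{K}^b_{\geqslant 0}(\mathrm{A})$. The main obstacle is fully faithfulness. I would first treat the base case $X, Y \in \mathrm{A}$: Yoneda gives $\Map_{\mc{P}^\mathrm{rex}_\Sigma(\mathrm{A}^\otimes)}(X, Y) = \Hom_\mathrm{A}(X, Y)$, while the dg-nerve formula of \Cref{doldkan} applied to complexes concentrated in degree $0$ gives $\Map_{\mathrm{K}^b_{\geqslant 0}(\mathrm{A})}(X, Y) = \mathrm{DK}(\tau_{\geqslant 0}\Hom_\mathrm{A}(X, Y)) = \Hom_\mathrm{A}(X, Y)$ as a discrete space.

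To propagate this agreement to arbitrary arguments, fix $X \in \mathrm{A}$ and observe that $\Map(X, -)$ preserves finite colimits on both sides: in $\mc{P}^\mathrm{rex}_\Sigma(\mathrm{A}^\otimes)$ because colimits of presheaves are pointwise and mapping out of a representable is evaluation; in $\mathrm{K}^b_{\geqslant 0}(\mathrm{A})$ because $\Hom_\mathrm{A}(X, -)$ applied termwise commutes with finite direct sums and with mapping cones of chain complexes, and Dold-Kan itself preserves these. The class of $F \in \mc{P}^\mathrm{rex}_\Sigma(\mathrm{A}^\otimes)$ for which $\psi$ induces an equivalence $\Map(X, F) \simeq \Map(\psi X, \psi F)$ therefore contains $\mathrm{A}$ and is closed under finite colimits, so it is all of $\mc{P}^\mathrm{rex}_\Sigma(\mathrm{A}^\otimes)$. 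Letting the source now vary and using that $\Map(-, G)$ turns finite colimits into finite limits on both sides, the same closure argument reduces the general case to the one just handled, completing the proof.
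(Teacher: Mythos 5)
Your first two steps (verifying that $\mathrm{K}^b_{\geqslant 0}(\mathrm{A})^\otimes$ lies in $\rmm\catinfty^\mathrm{rex}$ and producing the comparison map $\psi$ from the universal property of $\mc{P}^\mathrm{rex}_\Sigma$) reproduce exactly what the paper does. The paper then simply cites the proof of \cite[Proposition~7.4.5]{cisinski-bunke} for the claim that $\psi$ is an equivalence on underlying $\infty$-categories, whereas you try to prove this directly. Essential surjectivity is fine in spirit, up to a direction-of-arrows slip: for chain complexes with differential lowering degree, the correct cofiber sequence presenting $C_{\bullet}$ by induction on its length is of the form $C_n[n-1] \to \sigma_{<n}C \to C$, not $C_{\leqslant n-1} \to C_{\leqslant n} \to C_n[n]$ (there is no map $\sigma_{\leqslant n-1}C \to \sigma_{\leqslant n}C$ in general).

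The genuine gap is in the fully faithfulness argument. The key claim that for $X \in \mathrm{A}$ the mapping space functor $\Map(X, -)$ \emph{preserves finite colimits} on both sides is false, and the justification offered, namely that \emph{colimits of presheaves are pointwise}, does not apply. The category $\mc{P}^\mathrm{rex}_\Sigma(\mathrm{A}^\otimes)$ is not a presheaf category: it is a full subcategory closed under finite colimits of the Bousfield localization $\mathcal{P}_\Sigma(\mathrm{A}) \subseteq \PSh(\mathrm{A})$ of product-preserving presheaves, and colimits there are computed by first taking the pointwise colimit in $\PSh(\mathrm{A})$ and then reflecting back into $\mathcal{P}_\Sigma(\mathrm{A})$, a step that is not pointwise. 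Concretely, evaluation at a representable (equivalently, $\Map(y(X),-)$) preserves finite products and sifted colimits, but not pushouts; for example it fails to commute with suspension. The same failure occurs on the $\mathrm{K}^b_{\geqslant 0}(\mathrm{A})$ side, where $\Map(X, -) = \mathrm{DK}\circ\tau_{\geqslant 0}\circ\Hom(X,-)$ and the nonnegative truncation $\tau_{\geqslant 0}$ destroys commutation with suspensions and cofibers: already $\Map(X, Y[1])$ is a $K(\Hom_\mathrm{A}(X,Y),1)$ rather than the suspension in spaces of the discrete set $\Hom_\mathrm{A}(X,Y)$. Since neither side's $\Map(X,-)$ preserves finite colimits, your closure argument does not go through. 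A correct version of this induction has to work with mapping \emph{spectra} rather than mapping spaces, using that both $\mc{P}^\mathrm{rex}_\Sigma(\mathrm{A})$ and $\mathrm{K}^b_{\geqslant 0}(\mathrm{A})$ are prestable and embed finite-colimit-preservingly into their Spanier--Whitehead stabilizations, where representable mapping spectra are exact; this is effectively what the reference \cite{cisinski-bunke} invoked by the paper carries out.
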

\begin{proof}
    The tensor product on $\mathrm{K}^b_{\geqslant 0}(\mathrm{A})^\otimes$ preserves finite colimits separately on each variable, whence a unique symmetric monoidal map $\mc{P}^\mathrm{rex}_\Sigma(\mathrm{A}^\otimes)\to \mathrm{K}^b_{\geqslant 0}(\mathrm{A})^\otimes$ factorizing $\eta_{\mathrm{A}^\otimes}$. This map is an equivalence because it is an equivalence at the level of the underlying $\infty$-categories by the proof of \cite[Proposition 7.4.5]{cisinski-bunke}
\end{proof}

\begin{definition}
We let $\rmm\catinfty^\mathrm{st}$ be the $\infty$-category of \emph{small stable symmetric monoidal $\infty$-categories} defined as the full subcategory of $\rmm\catinfty^\mathrm{rex}$ spanned by stable $\infty$-categories.
\end{definition}
\begin{definition} Let $\mathscr{K}$ be a small collection of simplicial sets. Let $\ccal^\otimes\in \rmm\catinfty^\mathscr{K}$ and let $X$ be an object of $\ccal$. The \emph{$X$-stabilization of $\ccal$ (with $\mathscr{K}$-indexed colimits)} is the colimit in $\rmm\catinfty^\mathscr{K}$:
    \[\mathrm{Stab}^\mathscr{K}_X(\ccal)^\otimes\coloneqq\colim(\ccal^\otimes \xrightarrow{X\otimes-}\ccal^\otimes\xrightarrow{X\otimes-}\ccal^\otimes\xrightarrow{X\otimes-}\cdots).\]
This construction can obviously be arranged into a functor $(\rmm\catinfty^\mathscr{K})_{\ccal^\otimes/}\to( \rmm\catinfty^\mathscr{K})_{\ccal^\otimes/}$. We let \[\Sigma^\infty_X\colon \ccal^\otimes \to \mathrm{Stab}^\mathscr{K}_X(\ccal)^\otimes\] we the canonical functor.
\end{definition}    

\begin{rem}
    If $\mathscr{K}\subseteq\mathscr{K}'$ are small collections of simplicial sets, then we have a natural equivalence 
    \[\mathrm{Stab}_X^\mathscr{K'}\circ \mc{P}_\mathscr{K}^\mathscr{K'} \xrightarrow{\sim} \mc{P}_\mathscr{K}^\mathscr{K'}\circ\mathrm{Stab}_X^\mathscr{K}\] of functors $\rmm\catinfty^\mathscr{K}\to \rmm\catinfty^\mathscr{K'}$.
\end{rem}
\begin{rem}When $\mathscr{K}=\Sigma$, and $X$ is the object $S^1_\ccal:=*\sqcup_* *\in \ccal$, we recover the Spanier-Whitehead category $\mathrm{SW}(\ccal)^\otimes$ associated to $\ccal^\otimes$. Furthermore, the colimit defining the Spanier-Witehead category can be computed in $\catinfty^\times$; if $\ccal^\otimes$ belongs to $(\catinfty^\mathrm{rex})^\otimes$, the Spanier-Witehead category also has finite colimits and can be seen as an object of  $(\catinfty^\mathrm{rex})^\otimes$ which coincides with the same colimit computed in $(\catinfty^\mathrm{rex})^\otimes$.
\end{rem}
\begin{definition}
    Let $\mathrm{A}^\otimes$ be in $\mathrm{MAdd}$. The category of \emph{bounded chain complexes up to chain homotopy equivalences} $\mathrm{K}^b(\mathrm{A})^\otimes$ is the category $\mathrm{SW}(\mathrm{K}^b_{\geqslant 0}(\mathrm{A}))^\otimes\in \rmm\catinfty^\mathrm{st}$.
\end{definition}
\begin{rem}
    Let $\mathrm{A}^\otimes$ be in $\mathrm{MAdd}$. The underlying $\infty$-category $\mathrm{K}^b(\mathrm{A})$ of $\mathrm{K}^b(\mathrm{A})^\otimes$ is the localization of $\ccal^b(\mathrm{A})$ with respect to chain homotopy equivalences. The monoidal structure is given by the dg-structure described above.
\end{rem}
The following proposition is essentially due to Robalo.
\begin{proposition}\label{universalStab}
    Let $\mathscr{K}$ be a small collection of simplicial sets. Let $\ccal^\otimes\in \rmm\catinfty^\mathscr{K}$ and let $X$ be an object of $\ccal$. Then, for any $\dcal^\otimes\in \rmm\catinfty^\mathscr{K}$ the functor 
    \[\Fun_{\rmm\catinfty^\mathscr{K}}(\mathrm{Stab}^\mathscr{K}_X(\ccal)^\otimes,\dcal^\otimes)\xrightarrow{-\circ \Sigma^\infty_X} \Fun_{\rmm\catinfty^\mathscr{K}}(\ccal^\otimes,\dcal^\otimes)\]
    is fully faithful. Its essential image is the full subcategory made of those maps $\ccal^\otimes \to \dcal^\otimes$ that send $X$ to a $\otimes$-invertible object.
\end{proposition}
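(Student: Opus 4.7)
The plan is to verify separately that (i) $\Sigma^\infty_X(X)$ is $\otimes$-invertible in $\mathrm{Stab}^\mathscr{K}_X(\ccal)^\otimes$, and (ii) any symmetric monoidal functor $F\colon \ccal^\otimes\to \dcal^\otimes$ in $\rmm\catinfty^\mathscr{K}$ with $F(X)$ invertible factors uniquely, up to contractible choice, through $\Sigma^\infty_X$. Together these two facts yield the full faithfulness of $-\circ \Sigma^\infty_X$ and identify its essential image as claimed.

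For (i), the defining presentation
\[\mathrm{Stab}^\mathscr{K}_X(\ccal)^\otimes \coloneqq \colim\bigl(\ccal^\otimes \xrightarrow{X\otimes-}\ccal^\otimes \xrightarrow{X\otimes-}\ccal^\otimes \xrightarrow{X\otimes-}\cdots\bigr)\]
exhibits the endofunctor $X\otimes-$ on $\mathrm{Stab}^\mathscr{K}_X(\ccal)$ as the shift-by-one on a sequential colimit, which is always an auto-equivalence. Since an object $Y$ of a symmetric monoidal $\infty$-category is $\otimes$-invertible if and only if $Y\otimes-$ is an equivalence (a $\otimes$-inverse of $Y$ being then given by the image of $\un$ under the quasi-inverse), one deduces that $\Sigma^\infty_X(X)$ is invertible and hence that $-\circ\Sigma^\infty_X$ takes values in the stated subcategory of functors inverting $X$.

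For (ii), a functor out of the colimit is equivalent by the universal property to a coherent symmetric monoidal cocone on the defining diagram, and under this description $-\circ \Sigma^\infty_X$ is the projection onto the zeroth component of the cocone. Given $F$ with $F(X)$ invertible, the extension is morally obtained by successively ``tensoring with $F(X)^{-1}$''; the main obstacle is that multiplication by an invertible non-unit object is not itself a symmetric monoidal endofunctor of $\dcal$ (it does not preserve the unit), so declaring the resulting tower to be symmetric monoidal requires genuinely nontrivial coherence data organizing the higher tensor powers of $F(X)^{-1}$ into a compatible system. This coherence issue was resolved by Robalo in \cite{rob2} for presentable symmetric monoidal $\infty$-categories, and the same argument applies in $\rmm\catinfty^\mathscr{K}$: the filtered colimit admits a canonical symmetric monoidal structure making it the universal symmetric monoidal $\infty$-category under $\ccal^\otimes$ in which $X$ becomes $\otimes$-invertible, which simultaneously yields full faithfulness of $-\circ\Sigma^\infty_X$ and the identification of its essential image.
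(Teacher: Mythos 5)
Your two-step plan (invertibility of $\Sigma^\infty_X(X)$, then a universal property of the telescope) has the right shape, and you correctly put your finger on the central obstacle — that tensoring with a non-unit invertible object is not a symmetric monoidal endofunctor — but both halves then lean on a step that does not go through as written. In (i), the shift/cofinality argument applies to a sequential colimit of a diagram living in the category where the colimit is taken; here the transition functors $X\otimes-$ do not preserve the unit and so are not morphisms of $\rmm\catinfty^\mathscr{K}$ at all. The displayed colimit therefore only makes literal sense after reinterpretation (as a colimit of $\ccal$-modules, or as a pushout along the free case), and the assertion that the symmetric monoidal object with the universal property has the naive telescope as underlying $\infty$-category — equivalently, that $\Sigma^\infty_X(X)$ is $\otimes$-invertible in it — is precisely the non-formal content of Robalo's Proposition~4.21, not a consequence of the shift argument; in Robalo's presentable setting this identification requires either a symmetry hypothesis on $X$ (triviality of the cyclic permutation on $X^{\otimes 3}$) or a reduction to the free case, neither of which you invoke. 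In (ii), the concluding sentence ``the same argument applies in $\rmm\catinfty^\mathscr{K}$'' asserts the proposition rather than proving it.

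The paper's proof supplies exactly the missing reduction. It first treats the free symmetric monoidal $\infty$-category $\ccal_0^\otimes=F(\Delta^0)^\otimes$ on one generator $X_0$: Robalo's Propositions~4.1 and~4.2 produce the universal localization $L_0(\ccal_0^\otimes)$ inverting $X_0$, his Proposition~4.21 shows $\Sigma^\infty_{X_0}(X_0)$ is $\otimes$-invertible in $\mathrm{Stab}^\varnothing_{X_0}(\ccal_0)^\otimes$, and a two-out-of-three argument identifies $L_0(\ccal_0^\otimes)\simeq\mathrm{Stab}^\varnothing_{X_0}(\ccal_0)^\otimes$. The general case then follows from the identification
\[\mathrm{Stab}^\mathscr{K}_X(\ccal)^\otimes=\ccal^\otimes\sqcup_{\mc{P}_\varnothing^\mathscr{K}(\ccal_0^\otimes)}\mc{P}^\mathscr{K}_\varnothing\bigl(\mathrm{Stab}_{X_0}^\varnothing(\ccal_0)^\otimes\bigr),\]
so that $\Map_{\rmm\catinfty^\mathscr{K}}(\mathrm{Stab}^\mathscr{K}_X(\ccal)^\otimes,\dcal^\otimes)$ is the pullback of $\Map_{\rmm\catinfty^\mathscr{K}}(\ccal^\otimes,\dcal^\otimes)\to\dcal$ along the inclusion of the $\otimes$-invertible objects; this gives full faithfulness and the essential image simultaneously. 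To salvage your direct approach you would have to either impose and verify the symmetry condition on $X$, or carry out this reduction to the free case.
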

\begin{proof}
We adapt several arguments from \cite[Section~4]{rob2}. The forgetful functor
\[\rmm\catinfty \to \catinfty\]
admits a left adjoint $F(-)^\otimes$ which assigns to an $\infty$-category $\dcal$, the free symmetric monoidal $\infty$-
category generated by $\dcal$. We let $\ccal_0^\otimes\coloneqq F(\Delta^0)^\otimes$ and we let $X_0\in \ccal_0^\otimes$ be the image of the canonical map $\Delta^0\to \ccal_0$.

Let $(\rmm\catinfty)^{X_0}_{\ccal_0^\otimes/}$
be the full subcategory of $(\rmm\catinfty)_{\ccal_0^\otimes/}$ spanned by the algebras $\ccal_0^\otimes\to \dcal^\otimes$ whose structure map sends $X_0$ to an invertible object. By \cite[Proposition~4.1]{rob2}, the inclusion functor \[(\rmm\catinfty)^{X_0}_{\ccal_0^\otimes/}\to (\rmm\catinfty)_{\ccal_0^\otimes/}\]
has a left adjoint $L_0$. By \cite[Proposition~4.2]{rob2}, the forgetful functor
\[(\rmm\catinfty)_{L_0(\ccal_0^\otimes)/}\to (\rmm\catinfty)_{\ccal^\otimes_0/}\]
is fully faithful and its essential image is $(\rmm\catinfty)^{X_0}_{\ccal^\otimes_0/}$. Through this equivalence, the functor $L_0$ corresponds to tensoring with $L_0(\ccal_0^\otimes)$.

By \cite[Proposition~4.21]{rob2}, the object $\Sigma^\infty_{X_0}(X_0)$ in $\mathrm{Stab}_{X_0}^\varnothing(\ccal_0)$ is $\otimes$-invertible. This means that $\Sigma^\infty_{X_0}$ factors through $L_0(\ccal_0^\otimes)$. We claim that the map $L_0(\ccal_0^\otimes)\to \mathrm{Stab}_{X_0}^\varnothing(\ccal_0)^\otimes$ is an equivalence. The proof goes as the proof of \cite[Corollary~4.24]{rob2}, but we include it for the reader's convenience. Our map factors as 
\[L_0(\ccal_0^\otimes)\xrightarrow{L_0\big(\Sigma^\infty_{X_0}\big)}L_0(\mathrm{Stab}_{X_0}^\varnothing(\ccal_0)^\otimes)\xrightarrow{\varepsilon}\mathrm{Stab}_{X_0}^\varnothing(\ccal_0)^\otimes\]
where $\varepsilon$ is the counit of the adjunction given by $L_0$ and the inclusion. As the image of $X_0$ in $\mathrm{Stab}_{X_0}^\varnothing(\ccal_0)$ is $\otimes$-invertible, the map $\varepsilon$ is an equivalence. Furthermore, as $L_0$ is a left adjoint functor, it commutes with colimits so that it commutes with $\mathrm{Stab}^\varnothing_{X_0}$ and through the equivalence $\mathrm{Stab}_{X_0}^\varnothing(L_0(\ccal_0))^\otimes\xrightarrow{\sim} L_0(\mathrm{Stab}_{X_0}^\varnothing(\ccal_0)^\otimes)$, the map $L_0(\Sigma^\infty_{X_0})$ becomes the canonical map $L_0(\ccal_0^\otimes)\to\mathrm{Stab}_{X_0}^\varnothing(L_0(\ccal_0))^\otimes$. This map is an equivalence because $X_0$ is $\otimes$-invertible in $L_0(\ccal_0^\otimes)$ so that all the maps involved in the colimit defining $\mathrm{Stab}_{X_0}^\varnothing(L_0(\ccal_0))^\otimes$ are equivalences proving our claim.

To prove the proposition, observe that the data of any object of $\ccal$ is equivalent to that of a map $\mc{P}_\varnothing^\mathscr{K}(\ccal_0^\otimes)\to \ccal^\otimes$ by the various universal properties of the involved functors. Hence, the data of $X$ yields a map $\mc{P}_\varnothing^\mathscr{K}(\ccal_0^\otimes)\to \ccal^\otimes$. Observe now that in $\rmm\catinfty^\mathscr{K}$, we have 
\[\mathrm{Stab}^\mathscr{K}_X(\ccal)^\otimes=\ccal^\otimes\sqcup_{\mc{P}_\varnothing^\mathscr{K}(\ccal_0^\otimes)}\mc{P}^\mathscr{K}_\varnothing(\mathrm{Stab}_{X_0}^\varnothing(\ccal_0)^\otimes).\]
Hence, $\Map_{\rmm\catinfty^\mathscr{K}}(\mathrm{Stab}_X^\mathscr{K}(\ccal)^\otimes,\dcal^\otimes)$ can be written as the pullback
\[\begin{tikzcd}
    &\{Y\in \dcal \mid Y \text{ is }\otimes-\text{invertible}\}\ar[d,hook] \\
\Map_{\rmm\catinfty^\mathscr{K}}(\ccal^\otimes,\dcal^\otimes) \ar[r,"F\mapsto F(X)"] & \dcal
\end{tikzcd}\]
which finishes the proof.
\end{proof}

\begin{corollary}
    The inclusion $\rmm\catinfty^\mathrm{st}\hookrightarrow\rmm\catinfty^\mathrm{rex}$ has a left adjoint given by the Spanier-Whitehead functor. 
\end{corollary}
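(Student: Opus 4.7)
The plan is to deduce the corollary from \Cref{universalStab} applied with $\mathscr{K}=\mathrm{rex}$ and $X=S^1_\ccal$. By the remark following the definition of the Spanier--Whitehead category, $\mathrm{SW}$ coincides with $\mathrm{Stab}_{S^1_\ccal}^\mathrm{rex}$ on $\rmm\catinfty^\mathrm{rex}$. Hence for every $\dcal^\otimes\in\rmm\catinfty^\mathrm{rex}$, \Cref{universalStab} gives a fully faithful embedding
\[\Fun_{\rmm\catinfty^\mathrm{rex}}(\mathrm{SW}(\ccal)^\otimes,\dcal^\otimes)\hookrightarrow\Fun_{\rmm\catinfty^\mathrm{rex}}(\ccal^\otimes,\dcal^\otimes)\]
whose essential image consists of the functors sending $S^1_\ccal$ to a $\otimes$-invertible object.

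Two further verifications are needed. First, I would check that $\mathrm{SW}(\ccal)^\otimes$ is itself stable. The unit $\Sigma^\infty_{S^1_\ccal}$ is right-exact, so it sends the pushout defining $S^1_\ccal$ to the pushout defining $S^1_{\mathrm{SW}(\ccal)}$, and this latter object is therefore $\otimes$-invertible by construction of the stabilization. One then invokes the standard fact (cf.\ \cite{rob2}) that a symmetric monoidal $\infty$-category with finite colimits in which $S^1$ is $\otimes$-invertible is automatically stable. Second, for a stable $\dcal^\otimes\in\rmm\catinfty^\mathrm{st}$, every right-exact symmetric monoidal functor $F\colon\ccal^\otimes\to\dcal^\otimes$ satisfies $F(S^1_\ccal)=S^1_\dcal$ by right-exactness, and this object is $\otimes$-invertible by the same fact applied to $\dcal^\otimes$.

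Combining these two points, the essential image above fills the entire target whenever $\dcal^\otimes$ is stable, yielding an equivalence $\Fun_{\rmm\catinfty^\mathrm{rex}}(\mathrm{SW}(\ccal)^\otimes,\dcal^\otimes)\simeq\Fun_{\rmm\catinfty^\mathrm{rex}}(\ccal^\otimes,\dcal^\otimes)$. Since $\rmm\catinfty^\mathrm{st}\hookrightarrow\rmm\catinfty^\mathrm{rex}$ is a full subcategory inclusion, the left-hand side is naturally $\Fun_{\rmm\catinfty^\mathrm{st}}(\mathrm{SW}(\ccal)^\otimes,\dcal^\otimes)$, which exhibits $\mathrm{SW}$ as the desired left adjoint. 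The main obstacle is the interplay between stability and $\otimes$-invertibility of $S^1$ that is used twice above (most delicate on the direction extracting pointedness and invertible suspension from the sole invertibility of $S^1_\ccal$); everything else follows formally from the universal property established in \Cref{universalStab}.
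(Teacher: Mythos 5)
Your proof is correct and is precisely the deduction from \Cref{universalStab} that the paper leaves implicit: take $\mathscr{K}=\mathrm{rex}$ and $X=S^1_\ccal$, identify $\mathrm{SW}$ with $\mathrm{Stab}^\mathrm{rex}_{S^1_\ccal}$ via the preceding remark, and observe that the "invertibility of $S^1$" condition cutting out the essential image becomes vacuous exactly when the target is stable. You have also correctly isolated the genuine content of the step — that $\mathrm{SW}(\ccal)^\otimes$ lands in $\rmm\catinfty^\mathrm{st}$, i.e.\ that a right-exact symmetric monoidal $\infty$-category in which $S^1$ becomes $\otimes$-invertible is automatically pointed and stable — which is indeed Robalo's result; it would be worth citing the specific statement (\cite[Corollary~2.32]{rob2} or the analogous corollary in the published version) rather than gesturing at the paper, since that is the one nontrivial ingredient the corollary hangs on.
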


\section{The universal property of Voevodsky motives over a perfect field}
If $X$ is a scheme, denote by $\mc{Z}(X)$ the abelian group of algebraic cycles on $X$.
\begin{definition}(Voevodsky)\label{defcorr} Let $k$ be a perfect field and let $\Lambda$ be a commutative ring.
    We define the monoidal additive category $\Sm^\mathrm{cor}(k,\Lambda)^\otimes$ whose objects are smooth $k$-schemes and whose morphisms are given by the abelian groups $c_\Lambda(-,-)$ of finite correspondences:
\[c_\Lambda(X,Y)\coloneqq c(X,Y)\otimes_\Z \Lambda\] \[c(X,Y)=\{\alpha\in \mc{Z}(X\times_k Y)\mid \alpha\to X\text{ is finite and dominant over an irreducible component}\}\]
where a cycle is said to have a certain property if it has nonzero coefficients only on subvarieties having this property; composition is given by the usual composition of correspondences and the monoidal structure is given by the cartesian product on objects and by the external product of cycles on morphisms. 
\end{definition}
\begin{constr}(Voevodsky)\label{constrDMgm} Let $k$ be a perfect field and let $\Lambda$ be a commutative ring. 
We have a monoidal functor $\gamma\colon \Sm_k^\times \to \Sm^\mathrm{cor}(k,\Lambda)^\otimes$ which maps a morphism to its graph. We now recall Voevodsky's category of geometric motives $\DM_\mathrm{gm}(k,\Lambda)$. Let $Z$ be the thick subcategory of $\mathrm{K}^b(\Sm^\mathrm{cor}(k,\Lambda))$ generated by the complexes of the form:
\[\gamma(\AAA_X)\xrightarrow{\gamma(p)} \gamma(X)\]
and 
\[\gamma(U\cap V)\xrightarrow{\gamma(j^U)\oplus \gamma(j^V)} \gamma(U)\oplus \gamma(V)\xrightarrow{\gamma(j_U)\oplus(-\gamma(j_V))} \gamma(X)\]
for $X$ smooth over $k$, $p\colon \AAA_X\to X$ the canonical projection, $(U,V)$ an open cover of $X$ and $j_U\colon U\to X$, $j_V\colon V\to X$, $j^U\colon U\cap V\to U$ and $j^V\colon U\cap V\to V$ the inclusions. 
After Voevodsky, we define the symmetric monoidal $\infty$-category $\DM^\eff_\mathrm{gm}(k,\Lambda)^\otimes$ of \emph{effective motives} as the idempotent-completion of the Verdier quotient of $\mathrm{K}^b(\Sm^\mathrm{cor}(k,\Lambda))^\otimes$ by $Z$, namely its localization at the class of morphism whose cofiber lies in $Z$. As $Z$ is a tensor-ideal, it underlies a monoidal $\infty$-category such that the localization functor from $\mathrm{K}^b(\Sm^\mathrm{cor}(k,\Lambda))^\otimes$ is symmetric monoidal. 

We let $M\colon \Sm_k^\times\to \DM^\eff_\mathrm{gm}(k,\Lambda)^\otimes$ be the composite monoidal functor.
The point at infinity of the projective line gives a decomposition 
\[M(\mb{P}^1_k)=\Lambda_k\oplus \Lambda_k(1)[2]\]
 where $\Lambda_k\coloneqq M(\Spec(k))$ and $\Lambda_k(1)$ exists by idempotent-completeness. We let 
 \[\DM_\mathrm{gm}(k,\Lambda)^\otimes=\mathrm{Stab}_{\Lambda_k(1)}^\mathrm{rex}(\DM^\eff_\mathrm{gm}(k,\Lambda))^\otimes.\]
\end{constr}

\begin{theorem}\label{universalprtyDM}
    Let $k$ be a perfect field, let $\Lambda$ be a commutative ring and let $\ccal^\otimes$ be a small symmetric monoidal idempotent-complete stable $\infty$-category. Denote by $M^\mathrm{cor}\colon \Sm^\mathrm{cor}(k,\Lambda)^\otimes\to \DM_\mathrm{gm}(k,\Lambda)^\otimes$ the canonical functor. Then, the functor
    \[\Fun_{\rmm\catinfty^\mathrm{st}}(\DM_\mathrm{gm}(k,\Lambda)^\otimes,\ccal^\otimes)\xrightarrow{-\circ M^\mathrm{cor}} \Fun_{\rmm\catinfty^\Sigma}(\Sm^\mathrm{cor}(k,\Lambda)^\otimes,\ccal^\otimes)\] 
    is fully faitful and its essential image is the full subcategory of those symmetric monoidal $\infty$-functors $F\colon \Sm^\mathrm{cor}(k,\Lambda)^\otimes\to \ccal^\otimes$ such that
    \begin{enumerate}
        \item for any smooth $k$-scheme $X$ over $k$, letting $p\colon \AAA_X\to X$ be the canonical projection, the map $F(p)\colon F(\AAA_X)\to F(X)$ is an equivalence.
        \item If $(U,V)$ is an open cover of a smooth $k$-scheme $X$ and $j_U\colon U\to X$, $j_V\colon V\to X$, $j^U\colon U\cap V\to U$ and $j^V\colon U\cap V\to V$ are the inclusions, the triangle \[F(U\cap V)\xrightarrow{F(j^U)\oplus F(j^V)} F(U)\oplus F(V)\xrightarrow{F(j_U)\oplus(-F(j_V))} F(X)\]
        is exact.
        \item The object $\mathrm{Fib}(F(\mb{P}^1_k)\to F(\Spec(k)))$ is $\otimes$-invertible.
    \end{enumerate}
\end{theorem}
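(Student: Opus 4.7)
The plan is to chain together the universal properties of the three constructions interposing between $\Sm^\mathrm{cor}(k,\Lambda)^\otimes$ and $\DM_\mathrm{gm}(k,\Lambda)^\otimes$, all of which are made available by \Cref{inftycatcons}: passage to bounded chain complexes, Verdier quotient by the tensor-ideal $Z$ together with idempotent completion, and $\otimes$-stabilization with respect to $\Lambda_k(1)$.

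For the first step, I combine \Cref{universalKb} with the corollary after \Cref{universalStab}, which identifies the left adjoint to the inclusion $\rmm\catinfty^\mathrm{st}\hookrightarrow\rmm\catinfty^\mathrm{rex}$ with the Spanier--Whitehead functor. Since $\ccal^\otimes$ is stable, this produces a natural equivalence
\[\Fun_{\rmm\catinfty^\mathrm{st}}(\mathrm{K}^b(\Sm^\mathrm{cor}(k,\Lambda))^\otimes,\ccal^\otimes)\simeq \Fun_{\rmm\catinfty^\Sigma}(\Sm^\mathrm{cor}(k,\Lambda)^\otimes,\ccal^\otimes).\]

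For the second step, since $Z$ is a thick tensor-ideal, its Verdier quotient in $\rmm\catinfty^\mathrm{st}$ exists, and its universal property identifies monoidal exact functors from $\mathrm{K}^b(\Sm^\mathrm{cor}(k,\Lambda))^\otimes/Z$ to $\ccal^\otimes$ with monoidal exact functors from $\mathrm{K}^b(\Sm^\mathrm{cor}(k,\Lambda))^\otimes$ that annihilate the generators of $Z$. In a stable $\infty$-category, the two-term complex $[\gamma(\AAA^1_X)\xrightarrow{\gamma(p)} \gamma(X)]$ is equivalent to zero iff $F(p)$ is an equivalence (condition~(1)), and the three-term Mayer--Vietoris complex is equivalent to zero iff the associated triangle of~(2) is a (co)fiber sequence. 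Since $\ccal^\otimes$ is idempotent-complete, the idempotent completion in the definition of $\DM^\eff_\mathrm{gm}(k,\Lambda)^\otimes$ imposes no further constraint.

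For the third step, I apply \Cref{universalStab} with $\mathscr{K}=\mathrm{rex}$ and $X=\Lambda_k(1)$: monoidal exact functors out of $\DM_\mathrm{gm}(k,\Lambda)^\otimes=\mathrm{Stab}_{\Lambda_k(1)}^\mathrm{rex}(\DM^\eff_\mathrm{gm}(k,\Lambda))^\otimes$ correspond to monoidal exact functors from $\DM^\eff_\mathrm{gm}(k,\Lambda)^\otimes$ sending $\Lambda_k(1)$ to a $\otimes$-invertible object. Using the splitting $M(\mb{P}^1_k)=\Lambda_k\oplus\Lambda_k(1)[2]$, the fiber of $F(\mb{P}^1_k)\to F(\Spec(k))$ identifies with $F(\Lambda_k(1))[2]$, which is $\otimes$-invertible iff $F(\Lambda_k(1))$ is, yielding condition~(3). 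Concatenating the three equivalences completes the argument.

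The most delicate step will be the second: verifying that the Verdier quotient construction is compatible with the symmetric monoidal $\infty$-categorical framework (exploiting the tensor-ideal nature of $Z$) and its interaction with idempotent completion, together with rigorously translating the condition of annihilating the Mayer--Vietoris complex into the requirement that the associated triangle be exact in the stable setting.
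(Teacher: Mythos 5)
Your proposal is correct and follows exactly the paper's (very terse) proof: the paper simply says to combine \Cref{universalKb}, \Cref{universalStab} and the universal property of symmetric monoidal localizations, and your three steps are precisely the fleshed-out version of that chain, with the "delicate" second step handled in the paper by citing Hinich's theorem on symmetric monoidal localizations. The translations of the annihilation conditions into (1)--(3), including the identification of $\mathrm{Fib}(F(\mathbb{P}^1_k)\to F(\Spec(k)))$ with the image of $\Lambda_k(1)[2]$, are as intended.
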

\begin{proof}
    Combine \Cref{universalKb,universalStab} with the universal property of symmetric monoidal localizations \cite[Theorem~1.2.1]{hinich}.
\end{proof}


\section{The real realization}
The singular chain complex with $\mb{F}_2$-coefficients may be seen as a functor:
\[C_*(-,\F_2)\colon \Sm_{\R}^\times\to\D^b(\F_2)^\otimes.\]
To show that $C_*(-,\F_2)$ factor through Voevodsky motives, we must first show that it factors through $\Sm^\mathrm{cor}(\R,\F_2)^\otimes$. 
At the level of homology which is in fact the level of the homotopy category because $\F_2$ is a field, there is a somehow natural way of defining maps \[ c_{\F_2}(X,Y)\to \Hom_{\F_2}(\HH_n(X(\R),\F_2),\HH_n(Y(\R),\F_2))\]
(see \Cref{concretedescription} below), but it is then not obvious at all (at least to the author of this note) how to show that these maps are compatible with composition even without thinking about higher coherence. Circumventing this issue will require a detour through the theory of quadratic forms and the use of Jacobson's theorem (\Cref{jacobson} below).

\begin{definition}(Knebusch) Let $X$ be a scheme. 
\begin{enumerate}
    \item Let $\mathrm{GW}'(X)$ be the Grothendieck ring associated to the semi-ring of \emph{bilinear spaces} i.e. equivalence classes of locally free sheaves $\ecal$ on $X$ equipped with a non-degenerate symmetric bilinear form \[\beta\colon \ecal\times \ecal \to \ocal_X,\] 
where the semi-ring structure is given by orthogonal sum and tensor product. A bilinear space $(\ecal,\beta)$ is said to be \emph{metabolic} if it admits a Lagrangian, i.e. a sub-bundle $\mc{V}$ which coincides with its own orthogonal bundle\footnote{Metabolic spaces are the analog of hyperbolic spaces over a base.}. The \emph{Witt ring} $W(X)$ is the quotient of $\mathrm{GW}'(X)$ by the ideal generated by metabolic bilinear spaces. 
\item There is a natural map $W(X)\to \F_2^{\pi_0(X)}$ given by the rank modulo $2$. The kernel of this map is called the \emph{fundamental ideal} of $W(X)$ and is denoted by $I(X)$. For $n\geqslant 0$ an integer, we denote by $I^n(X)$ the $n$-th power of the fundamental ideal.
\end{enumerate}
\end{definition}
\begin{constr}(Jacobson) Let $X$ be a finite-type $\R$-scheme. The \emph{global signature map} is the map 
    \[\sigma\colon W(X)\to \mathrm{H}^0(X(\R),\Z)\]
    which sends a bilinear space $(\ecal,\beta)$ to the locally constant function on $X(\R)$ given by mapping a real point $x$ to the signature of the restriction $\beta_x$ of $\beta$ to $x$. 

    In \cite[Definition~8.3]{jacobson}, it is explained why this map lands in $\mathrm{H}^0(X(\R),2^n\Z)$ when restricted to $I^n(X)$ for any integer $n\geqslant0$, whence a family of maps \[\left(\sigma_n:=\frac{1}{2^n}\sigma \colon I^n(X) \to \mathrm{H}^0(X(\R),\Z)\right)_{n\geqslant 0}.\] 
    Multiplication by $2=\langle 1,1\rangle \in I(X)$ induces for all $n\geqslant 0$ a map $I^n(X)\xrightarrow{\times 2} I^{n+1}(X)$ and the $\sigma_n$ form a compatible family of maps $\sigma_{n+1}\circ (\times 2)=\sigma_n$. 
    

Let $\overline{I}^n(X)=I^n(X)/I^{n+1}(X)$ for $n\geqslant 0$; the above family of map yields a family of maps \[\left(\overline{\sigma}_n\colon \overline{I}^n(X) \to \mathrm{H}^0(X(\R),\F_2)\right)_{n\geqslant 0}\]
and these maps are compatible with one another: multiplication by $2$ still gives a map $\overline{I}^n(X)\to \overline{I}^{n+1}(X)$ and $\overline{\sigma}_{n+1}\circ (\times 2)=\overline{\sigma}_n$.    
\end{constr}
\begin{constr}\label{constrjacmap}(Jacobson) Let $X$ be a finite-type $\R$-scheme. We let 
    \[\iota\colon X(\R)\hookrightarrow X\]
    be the inclusion which is a continuous map when the right hand side is endowed with the Zariski topology. For $n\geqslant 0$, let $\overline{\mc{I}}^n$ be the Zariski sheaf associated to the presheaf $U\mapsto \overline{I}^n(U)$; the map $\overline{\sigma}_n$ induces a map of sheaves
    \[s_n\colon \overline{\mc{I}}^n\to \iota_*\F_2.\]
    Letting $\overline{\mc{I}}^\infty\coloneqq \colim_n \ \overline{\mc{I}}^n$, as the $s_n$ are compatible with one another, we get a map \[s_\infty\colon \overline{\mc{I}}^\infty\to \iota_*\F_2.\]
\end{constr}

Denote by $C^*(-,\F_2)$ the complex of singular cochains. The above construction will be relevant for us by using the following result:
\begin{theorem}\label{jacobson} \cite[Theorem~8.5]{jacobson} Let $X$ be a finite-type $\R$-scheme. Then the map $s_\infty$ is an isomorphism of Zariski sheaves. In particular, we have a natural isomorphism \[\mathbf{s_\infty}\colon R\Gamma(-, \overline{\mc{I}}^\infty)\to C^*(-(\R),\F_2).\] 
of functors from $(\Sm_{\R}^\op)^\times$ to $\D^b(\F_2)^\otimes$. 
\end{theorem}
Using the above theorem, our strategy is now to show that the functors \[R\Gamma(-,\overline{\mc{I}}^n)\colon (\Sm_\R^\op )^\times\to \D^b(\F_2)^\otimes\] for $n\geqslant 0$ factor through $(\Sm^\mathrm{cor}(\R,\F_2)^\op)^\otimes$. 
This is a fairly classical statement in similar contexts (see \cite[Chapter~1, Lemma~2.41]{MWmotives}). To do this, we will first prove that the sheaves $\overline{\mc{I}}^n$ admits an additional structure of sheaves with transfers. Endowed with this further structure, we will show that they are effective motives over $\R$.

\begin{constr}(Voevodsky) 
    Let $k$ be a perfect field and let $\Lambda$ be a commutative ring. 
    \begin{enumerate}
        \item  The symmetric monoidal abelian category of \emph{spectral presheaves with transfers} is the symmetric presentably monoidal stable $\infty$-category \[\mathrm{PSh}^\mathrm{tr}(k,\Lambda)^\otimes\coloneqq\Fun_\Sigma(\Sm^\mathrm{cor}(k,\Lambda)^\op,\mathrm{Sp})^\otimes=\Ind(\mathrm{K}^b(\Sm^\mathrm{cor}(k,\Lambda)^\otimes))\] with $\mathrm{Sp}$ the category of spectra (tensor product is given by Day convolution or by the tensor product on the Ind-category which coincide by \cite[Remark~4.8.1.13]{ha}).
        \item A presheaf with transfers $F$ is a Nisnevich sheaf if $F\circ \gamma$ is a Nisnevich sheaf. Denote by $\Sh_\mathrm{Nis}^\mathrm{tr}(k,\Lambda)$ the full subcategory of \emph{Nisnevich sheaves with transfers}. It is a Bousfield localization of $\mathrm{PSh}^\mathrm{tr}(k,\Lambda)$ compatible with the tensor structure in the sense of \cite[Example~2.2.1.7]{ha} and therefore by \cite[Proposition~2.2.1.9]{ha}, it underlies a symmetric presentably monoidal stable $\infty$-category $\Sh_\mathrm{Nis}^\mathrm{tr}(k,\Lambda)^\otimes$ and there is a symmetric monoidal enhancement 
    \[L_\mathrm{Nis}\colon \PSh^\mathrm{tr}(k,\Lambda)^\otimes\to \Sh_\mathrm{Nis}^\mathrm{tr}(k,\Lambda)^\otimes\]
     of the localization functor.
     \item Let $\Sch_k$ (resp. $\Sch_k^\mathrm{prop}$) be the category of finite type $k$-schemes (resp. finite type $k$-schemes with proper maps as morphisms). We can define two symmetric monoidal functors 
     \[\Lambda^\mathrm{tr}\colon \Sch_k^\times\to \Fun_\Sigma(\Sm^\mathrm{cor}(k,\Lambda),\mathrm{Ab})^\otimes\]
     \[\Lambda^\mathrm{tr}_c\colon (\Sch_k^\mathrm{prop})^\times\to \Fun_\Sigma(\Sm^\mathrm{cor}(k,\Lambda),\mathrm{Ab})^\otimes\]
     which in the case $\Lambda=\Z$ are denoted by $L$ and $L^c$ in \cite[Chapter~5, Section~4.1]{orange}. For any smooth $k$-scheme $U$ and any $k$-scheme of finite type $X$,
\begin{align*}
    \Lambda^\mathrm{tr}(X)(U)=&\{\alpha \in \mc{Z}(U\times_k X)\otimes_\Z\Lambda\mid
    \\& \alpha\to U\text{ is finite and dominant over a connected component}\}
    \\
    \Lambda^\mathrm{tr}_c(X)(U)=&\{\alpha \in \mc{Z}(U\times_k X)\otimes_\Z\Lambda\mid \\
    &\alpha\to U\text{ is quasi-finite and dominant over a connected component}\}
\end{align*}
Composing those with the Eilenberg-Maclane functor $H\colon \mathrm{Ab}^\otimes\to\mathrm{Sp}^\otimes$ we get two functors $H\Lambda^\mathrm{tr}$ and $H\Lambda^\mathrm{tr}_c$.
The full subcategory of objects which are local with respect to the maps $H\Lambda^\mathrm{tr}(\AAA_X)\to H\Lambda^\mathrm{tr}(X)$ for $X$ smooth over $k$ underlies a symmetric monoidal $\infty$-category $\DM^\eff(k,\Lambda)^\otimes$ (the category of \emph{effective motives}) and there is a symmetric monoidal $\infty$-functor
\[L_{\AAA}\colon \Sh^\mathrm{tr}_\mathrm{Nis}(k,\Lambda)^\otimes\to \DM^\eff(k,\Lambda)^\otimes\]
which enhances the localization functor. The name of this category is justified by Voevodsky's embedding theorem (see \Cref{embeddingthm} below).
\end{enumerate}
\end{constr}

 The critical point is now that if $n\geqslant 0$, the sheaf $\overline{\mc{I}}^n$ on $\Sm_\R$ admits a flabby resolution called the \emph{Gersten resolution}:
\[0\to \overline{\mc{I}}^n\to \bigoplus_{x\in (-)^{(0)}}\overline{I}^n(k(x))\to \bigoplus_{x\in (-)^{(1)}}\overline{I}^{n-1}(k(x))\to \cdots \to \bigoplus_{x\in (-)^{(p)}}\overline{I}^{n-p}(k(x))\to \cdots\]
where $Y^{(m)}$ is the set of codimension $m$ points of a scheme $Y$ and if $q<0$ and $F$ is a field, $\overline{I}^{q}(F)=W(F)/I(F)$; this is actually a very subtle result as it follows from \cite[Corollary~0.5]{milnorchowrevisited} which is a consequence of the Milnor conjecture for K-theory. This resolution allows us to endow the $\overline{\mc{I}}^n$ with a structure of a sheaf with transfers using the following proposition. 

\begin{proposition}\label{deglisefactor}\cite[Corollaire~4.3.6, Remarque~4.3.10]{thesedeglise}
    Let $k$ be a perfect field. Let $\mathrm{F}^*$ be a cycle module over $k$ in the sense of \cite{rost}. For any integer $n$, let \[\mc{F}^n=\ker\left(\bigoplus_{x\in (-)^{(0)}}\mathrm{F}^n(k(x))\to \bigoplus_{x\in (-)^{(1)}}\mathrm{F}^{n-1}(k(x))\right).\] Then $\mc{F}^n$ naturally extends to a presheaf with transfers. Furthermore, this presheaf belongs to $\DM^\eff(k,\Lambda)$.
\end{proposition}\begin{constr}\label{constrtransfert}
 Using \cite{arason}, we have a cycle module $\overline{I}^*$ over any perfect field $k$ whose value on an extension $F/k$ of finite transcendence degree is $\overline{I}^*(F)$. This yields by \Cref{deglisefactor} an object $\overline{\mc{I}}^n_\mathrm{tr}$ in $\DM^\eff(\R,\F_2)$ such that \[\overline{\mc{I}}^n=\overline{\mc{I}}^n_\mathrm{tr}\circ \gamma.\] 
Hence by adjunction, we have an isomorphism 
\[\map_{\DM^\eff(\R,\F_2)}(H\F_2^\mathrm{tr}(-),\overline{\mc{I}}^n_\mathrm{tr})\to R\Gamma(-,\overline{\mc{I}}^n)\]
of functors from $(\Sm_{\R}^\op)^\times$ to $\D^b(\F_2)^\otimes$. The left hand side factors through $(\Sm^\mathrm{cor}(\R,\F_2)^\otimes)^\op$ as the functor $H\F_2^\mathrm{tr}(-)$ factors through $\Sm^\mathrm{cor}(\R,\F_2)^\otimes$. Thus by Jacobson's Theorem (\Cref{jacobson}), we get a factorization $C_\mathrm{{cor}}^*$ of $C^*(-,\F_2)$ through $\Sm^\mathrm{cor}(\R,\F_2)^\otimes$. Applying $\map(-,\F_2)$, we get a a factorization $C^\mathrm{{cor}}_*$ of $C_*(-,\F_2)$ through $\Sm^\mathrm{cor}(\R,\F_2)^\otimes$. 
\end{constr}

\begin{rem}\label{concretedescription} We can give a more concrete description of the factorization on homology groups. 
Let $f\colon X'\to X$ be a proper map of real topological manifolds of pure dimensions. Then, the \emph{transfer map} $f^!$ is defined as the composition
    \[\HH_*(X(\R),\F_2)\xrightarrow{\mathrm{PD}^{-1}} \HH^{\dim(X)-*}_c(X(\R),\F_2)\xrightarrow{f^*}\HH^{\dim(X)-*}_c(X'(\R),\F_2)\xrightarrow{\mathrm{PD}}\HH_{*+\dim(X')-\dim(X)}(X'(\R),\F_2)\]
    where $\mathrm{PD}$ is the Poincaré duality map from cohomology with compact support to homology.Let $X,Y$ be smooth real algebraic varieties. If $Z\hookrightarrow X\times_k Y$ is a closed subscheme such that the map $p \colon Z\to X$ is finite and dominant, letting $q\colon Z\to Y$ be the canonical map, we have a map \[\varphi_Z\colon \HH_*(X(\R),\F_2)\xrightarrow{p^!}\HH_*(Z(\R),\F_2)\xrightarrow{q_*}\HH_*(Y(\R),\F_2).\]
These assemble into a map 
\[ c(X,Y)\to \Hom_{\F_2}(\HH_*(X(\R),\F_2),\HH_*(Y(\R),\F_2))\]
which we invite the reader to check that it is the one given by our functor. With this description it is however completely unclear how to show compatibility with composition. 
\end{rem}

\begin{rem}
    Another way of constructing our functor is to use the existence of a Gersten complex for real cohomology of real schemes from \cite{jinxie}. Their approach also relies on the existence of the Gersten complex for $\overline{\mc{I}}$ and therefore on the Milnor conjecture.
\end{rem}
  

\begin{proposition}
    The functor $C^\mathrm{{cor}}_*\colon \Sm^\mathrm{cor}(\R,\F_2)^\otimes\to \D^b(\F_2)^\otimes$ factors through $\DM_\mathrm{gm}(\R,\F_2)^\otimes$. Hence, we have a \emph{real realization functor} 
    \[\rho_{\R}\colon\DM_\mathrm{gm}(\R,\F_2)^\otimes\to \D^b(\F_2)^\otimes.\]
\end{proposition}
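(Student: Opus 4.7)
The plan is to apply Theorem~\ref{universalprtyDM} to the additive functor $\HH^\mathrm{cor}_*$ of Construction~\ref{constrtransfert}. The target $\mathrm{GrVect}_{\F_2}^\otimes$ is identified with the small stable idempotent-complete symmetric monoidal $\infty$-category $\Perf_{\F_2}^\otimes$, so it remains to check that $\HH^\mathrm{cor}_*$ is symmetric monoidal and to verify the three conditions of the theorem.

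For symmetric monoidality, the underlying topological functor $C_*(-(\R),\F_2)\circ\gamma$ on $\Sm_\R^\times$ is symmetric monoidal thanks to the Künneth theorem over the field $\F_2$, which gives a natural quasi-isomorphism
\[C_*(X(\R),\F_2)\otimes_{\F_2} C_*(Y(\R),\F_2)\xrightarrow{\sim} C_*(X(\R)\times Y(\R),\F_2)\]
(in characteristic $2$ there are no Koszul signs to worry about). The extension $\HH^\mathrm{cor}_*$ to correspondences then preserves tensor products because the monoidal structure on $\Sm^\mathrm{cor}(\R,\F_2)^\otimes$ is given on morphisms by the external product of cycles, which via the cycle module $\overline{I}^*$ of \cite{arason} is compatible with the external product on Gersten complexes and hence with cup products in Zariski cohomology through Jacobson's isomorphism.

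The three geometric conditions all reduce to classical facts of topology, since on the image of $\gamma\colon\Sm_\R^\times\to\Sm^\mathrm{cor}(\R,\F_2)^\otimes$ the functor $\HH^\mathrm{cor}_*$ agrees with singular homology of the real points. The projection $(\AAA_X)(\R)=X(\R)\times\R\to X(\R)$ is a homotopy equivalence, giving $\A^1$-invariance. A Zariski open cover $(U,V)$ of $X$ yields a usual open cover $(U(\R),V(\R))$ of $X(\R)$, and the classical Mayer--Vietoris sequence on singular $\F_2$-chains gives the required exact triangle. Finally, $\mb{P}^1_\R(\R)=S^1$ has mod-$2$ singular homology $\F_2\oplus\F_2[1]$, and since the map $\HH^\mathrm{cor}_*(\mb{P}^1_\R)\to\HH^\mathrm{cor}_*(\Spec(\R))$ is split by the point at infinity, its fiber is $\F_2[1]$, which is $\otimes$-invertible in $\Perf_{\F_2}$.

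The main obstacle is really the symmetric monoidality of $\HH^\mathrm{cor}_*$: the three geometric conditions concern morphisms coming from $\gamma$ and are essentially tautological once the existence of the factorization is granted, whereas the multiplicativity of the factorization forces one to trace the tensor structure through the Gersten resolutions of the sheaves $\overline{\mc{I}}^m$. Assuming this compatibility, Theorem~\ref{universalprtyDM} then produces the desired symmetric monoidal functor $\rho_\R\colon\DM_\mathrm{gm}(\R,\F_2)^\otimes\to\mathrm{GrVect}_{\F_2}^\otimes$.
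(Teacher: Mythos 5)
Your proof is correct and follows the same route as the paper, which simply invokes Theorem~\ref{universalprtyDM}: the three conditions only involve objects and morphisms in the image of $\gamma$, where $\HH^\mathrm{cor}_*$ agrees with singular homology of the real points, so they reduce to homotopy invariance of $X(\R)\times\R\to X(\R)$, classical Mayer--Vietoris for the Euclidean open cover $(U(\R),V(\R))$, and the invertibility of $\widetilde{\HH}_*(S^1,\F_2)=\F_2[1]$. Your remark that the real content lies in the symmetric monoidality of $\HH^\mathrm{cor}_*$ is well taken, but that is part of Construction~\ref{constrtransfert} rather than of this proposition.
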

\begin{proof}
    This is a direct consequence of \Cref{universalprtyDM}.
\end{proof}

\begin{rem}\label{furealismyreal}
    On Chow motives, the functor $\rho_\R$ induces the functor $H\R^*$ constructed in \cite[Section~11.2(iv)]{furealreal}. This can be proved using \Cref{concretedescription}. 
\end{rem}

Let us now recall Voevodsky's constructions of the motivic analogs of homology, Borel Moore homology, cohomology and cohomology with compact support. We will then show that their respective images through the real realization are the usual version of those (co)homology theories.

\begin{constr}\label{embeddingthm}
    (Voevodsky's Embedding Theorem) The objects defining the localization $\DM^\eff_\mathrm{gm}(k,\Lambda)$ of $\mathrm{K}^b(\Sm^\mathrm{cor}(k,\Lambda))$ are sent to zero through the canonical functor $\mathrm{K}^b(\Sm^\mathrm{cor}(k,\Lambda))\to \PSh^\mathrm{tr}(k,\Lambda)$ so that we get a symmetric monoidal exact $\infty$-functor 
\[i\colon \DM^\eff_\mathrm{gm}(k,\Lambda)^\otimes\to \DM^\eff(k,\Lambda)^\otimes.\]
By \cite[Lemma~2.61]{Pstragowski}, we have a canonical equivalence \[\mathrm{PSh}^\mathrm{tr}(k,\Lambda)\xrightarrow{\sim} \D(\Fun_\Sigma(\Sm^\mathrm{cor}(k,\Lambda)^\op,\mathrm{Ab})))\]
we can then check that our construction is an $\infty$-categorical enhancement of that of \cite[Chapter~5, Section~3.1]{orange}. Hence, the functor $i$ is fully faithful by \cite[Chapter~5, Theorem~3.2.6]{orange}.  Note that it sends $M(X)$ to $L_{\AAA}(\Lambda^\mathrm{tr}(X))$. 
\end{constr}

\begin{definition}Let $k$ be a perfect field and let $\Lambda$ be a commutative ring.
    Assume that $k$ allows resolutions of singularities. 
    
    \begin{enumerate}
        \item By \cite[Chapter~5, Corollaries~4.1.4 \&~4.1.6]{orange}, the functors $L_{\AAA} \circ H\Lambda^\mathrm{tr}$ and $L_{\AAA} \circ H\Lambda^\mathrm{tr}_c$ land in $\DM^\eff_\mathrm{gm}(k,\Lambda)$. We denote by $M$ and $M^\mathrm{BM}$ their respective compositions with $\Sigma^\infty_{\Lambda(1)}$\footnote{The functor $M$ restricts to the one defined before on smooth $k$-schemes.}
        \item By \cite[Chapter~5, Corollary~4.3.4]{orange}, the internal Hom of $\DM^\eff(k,\Lambda)$ induces an internal Hom functor on $\DM_\mathrm{gm}(k,\Lambda)$. We let \[\mathbb{D}\coloneqq\underline{\Hom}_{\DM_\mathrm{gm}(k,\Lambda)}(-,\Lambda)\] be the \emph{Poincaré duality functor}. We set $h\coloneqq\mathbb{D}\circ M$ and $h_c\coloneqq\mathbb{D}\circ M^\mathrm{BM}$.
    \end{enumerate}

\end{definition}

Denote by $C_*^\mathrm{BM}(-,\F_2)$ the complex of singular chains with closed support and by $C^*_c(-,\F_2)$ the complex of singular cochains with closed support. 
\begin{proposition}
    The functor $\rho_\R$ sends $M$ (resp. $M^\mathrm{BM}$, resp. $h$, resp. $h_c$) to $C_*(-,\F_2)$ (resp. $C_*^\mathrm{BM}(-,\F_2)$, resp. $C^*(-,\F_2)$, resp. $C^*_c(-,\F_2)$).
\end{proposition}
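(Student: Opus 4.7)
The case of $M(X)$ for smooth $X$ is immediate from the construction of $\rho_\R$: by \Cref{universalprtyDM}, $\rho_\R$ is the essentially unique factorization of $C_*(-(\R),\F_2)$ through $\DM_\mathrm{gm}(\R,\F_2)^\otimes$, with the transfers supplied by Jacobson's theorem and \Cref{constrtransfert}.

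To extend the identification to an arbitrary $X \in \Sch_\R$, the plan is to show that both $\rho_\R \circ M$ and $C_*(-(\R),\F_2)$ satisfy descent for abstract blow-up squares, and then reduce to the smooth case via resolution of singularities. On the motivic side, this descent is part of Voevodsky's construction of $M$ on $\Sch_k$. On the topological side, if $f \colon \tilde X \to X$ is a proper morphism of $\R$-varieties restricting to an isomorphism over $X \setminus Z$ (with $E = f^{-1}(Z)$), then $X(\R)$ is identified with the pushout $\tilde X(\R) \sqcup_{E(\R)} Z(\R)$ as topological spaces, which yields a Mayer--Vietoris fiber sequence on singular chains. Since $\rho_\R$ is exact, it sends the motivic blow-up triangle $M(E) \to M(\tilde X) \oplus M(Z) \to M(X)$ to a fiber sequence matching the topological one. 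By Hironaka and Noetherian induction on $\dim X$, the identification propagates from smooth schemes to all of $\Sch_\R$. The argument for $M^\mathrm{BM}$ is analogous: one starts from the agreement $M^\mathrm{BM}(X) = M(X)$ for $X$ smooth and proper (matched by $C^\mathrm{BM}_* = C_*$ on compact real points), and then uses a compactification $X \hookrightarrow \bar X$ together with the localization triangle $M^\mathrm{BM}(\bar X \setminus X) \to M^\mathrm{BM}(\bar X) \to M^\mathrm{BM}(X)$ and its topological counterpart for Borel--Moore chains.

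The cases of $h$ and $h^c$ then follow formally from duality. Since every object of $\DM_\mathrm{gm}(\R,\F_2)$ is dualizable and $\rho_\R$ is symmetric monoidal, $\rho_\R$ preserves internal duals; in particular $\rho_\R(h(X)) = \rho_\R(M(X))^\vee = C_*(X(\R),\F_2)^\vee$, and the right-hand side identifies with $C^*(X(\R),\F_2)$ via linear duality in $\D^b(\F_2)$ (equivalently, the universal coefficients theorem). The same reasoning gives $\rho_\R(h^c(X)) = C^*_c(X(\R),\F_2)$.

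The main obstacle is the descent step on the topological side, namely checking that the real-points functor converts an algebraic abstract blow-up square into a pushout square of topological spaces (and, for $M^\mathrm{BM}$, that a closed immersion of $\R$-varieties yields a suitable cofibration on real points so that the Borel--Moore localization triangle matches the motivic one). This is essentially a semialgebraic/point-set argument, but needs to be carried out carefully; once it is in place, everything else is formal manipulation with the rigid symmetric monoidal functor $\rho_\R$.
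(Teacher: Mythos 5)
Your proof follows the paper's argument essentially step for step: agreement on $\Sm_\R$ by construction of $\rho_\R$, propagation to all of $\Sch_\R$ via the abstract blow-up triangle and induction on dimension, the proper case plus localization triangles for $M^{\mathrm{BM}}$, and duality for $h$ and $h^c$. The only (minor) divergence is in the last step, where you invoke rigidity of $\DM_\mathrm{gm}(\R,\F_2)$ so that $\rho_\R$ preserves duals and then apply universal coefficients, whereas the paper constructs the comparison map $\rho_\R\circ\mathbb{D}\to\mathbb{D}\circ\rho_\R$ and checks it is an equivalence on the smooth proper generators using Poincaré duality for motives and for singular homology -- the two routes amount to the same reduction.
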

\begin{proof}
    By definition, $C_*(-,\F_2)$ and $\rho_\R\circ M$ coincide over $\Sm_\R$. Observe that if $p\colon X'\to X$ is proper birational and $Z\to X$ is a closed embedding such that $p^{-1}(X\setminus Z)\to X\setminus Z$ is an isomorphism, the canonical triangle
    \[M(p^{-1}(Z))\to M(X')\oplus M(Z)\to M(X)\]
    is exact by \cite[Chapter~5, Proposition~4.1.3]{orange}. As the same is true for the analogous triangle with $C_*(-,\F_2)$, we can show by induction on $d$ that the functors $C_*(-,\F_2)$ and $\rho_\R\circ M$ coincide over the category $\Sch^{\leqslant d}_\R$ of finite-type $\R$-schemes of dimension $d$ or less which yields the result over $\Sch_\R$.

    Since $C_*(-,\F_2)$ and $C_*^\mathrm{BM}(-,\F_2)$ (resp. $M$ and $M^\mathrm{BM}$) coincide over proper schemes, we get that $C_*^\mathrm{BM}(-,\F_2)$ and $\rho_\R\circ M^\mathrm{BM}$ coincide over proper schemes. The localization triangle \cite[Chapter~5, Proposition~4.1.5]{orange} for the Borel-Moore motive and the localization triangle for cohomology with compact support yield that they coincide over $\Sch_\R^\mathrm{prop}$.

    Since the functor $\rho_\R$ is symmetric monoidal, we have a natural transformation
    \[\rho_\R\circ \mathbb{D}\to \mathbb{D}\circ \rho_\R\]
    The result will follow if we can show that this map is an equivalence. As both maps commute with finite limits, finite colimits and twists and as $\DM_\mathrm{gm}(\R,\F_2)$ is generated, under those operations, by the $M(X)$ for $X$ proper smooth over $\R$ by \cite[Chapter~5, Corollary~3.5.5]{orange}, it suffices to show that both maps coincide over those objects. It is then a consequence of Poincaré duality for motives \cite[Chapter~5, Theorem 4.3.2]{orange} and Poincaré duality for singular homology. 
\end{proof}
\section{The weight filtration on real singular homology}
We will now reconstruct the weight filtration on singular homology. To that end we recall the theory of weights and the weight structure on motives. 
\begin{definition}(Bondarko) 
    Let $\ccal$ be a stable $\infty$-category. A full subcategory $\hcal\subseteq \ccal$ is the \emph{heart of a bounded weight structure} if
    \begin{enumerate}
        \item it is closed under finite coproducts and retracts,
        \item it generates $\ccal$ under finite limits and colimits,
        \item the mapping spectrum $\map_{\ccal}(X, Y )$ is connective for all $X, Y \in\hcal$.
    \end{enumerate}
If $\ccal^\otimes\in \rmm\catinfty^\mathrm{st}$ is a monoidal enhancement of $\ccal$ we say that $\hcal$ is the \emph{heart of a bounded compatible
weight structure} if it furthermore contains the tensor unit and is closed under tensor products.
\end{definition}
\begin{ex}(Bondarko) Let $k$ be a perfect field of characteristic $0$ and let $\Lambda$ be a commutative ring. The heart of the \emph{Chow weight structure} is the subcategory $\mathrm{Chow}_\infty(k,\Lambda)$ of $\DM_\mathrm{gm}(k,\Lambda)$ generated under finite coproducts and retract by the $M(X)(n)[2n]$ for $X$ projective smooth over $k$ and $n\in \Z$. It indeed generates $\DM_\mathrm{gm}(k,\Lambda)$ because of resolutions of singularities and the relevant mapping spaces are connective by \cite[Proposition~3.1.1]{bondarkoZ[1/p]-motivic}. It is compatible with the monoidal structure on $\DM_\mathrm{gm}(k,\Lambda)$ because \[M(X)(n)[2n]\otimes M(Y)(m)[2m]=M(X\times_k Y)(n+m)[2(n+m)].\]
The homotopy category $\mathrm{Chow}_\infty(k,\Lambda)$ is Grothendieck's category $\mathrm{Chow}(k,\Lambda)$ of \emph{Chow motives}. 
\end{ex}



We now recall a lemma of used by Sosnilo, and Aoki in the monoidal context, to produce an $\infty$-categorical enhancement of Bondarko's weight complex functor. 

\begin{lemma}\label{aokiji}(Sosnilo-Aoki, compare with \cite[Proposition~3.27]{bgv} and \cite[Proposition~3.3]{sosnilonegativeKtheory})
    Let $\ccal^\otimes$ be a small stable symmetric monoidal $\infty$-category and let $\hcal\subseteq \ccal$ be the heart of a bounded compatible weight structure. There is an equivalence
    \[\ccal^\otimes \xrightarrow{\sim} \mathrm{SW}\circ \mc{P}^\mathrm{rex}_\Sigma(\hcal)^\otimes\]
    in $\rmm\catinfty^\mathrm{st}.$
In particular, for any stable monoidal $\infty$-category $\dcal^\otimes$, the natural functor
    \[\Fun_{\mathrm{MCat}^\mathrm{st}_\infty}(\ccal^\otimes,\dcal^\otimes)\to \Fun_{\mathrm{MCat}^\Sigma_\infty}(\hcal^\otimes,\dcal^\otimes)\]
    is an equivalence.
\end{lemma}
\begin{proof}
    By \cite[Lemma~4.2]{aoki}, there is an equivalence in $\rmm\catinfty^\mathrm{st}$ between $\ccal^\otimes$ and the full subcategory of $\mathrm{Sp}(\Fun_\Sigma(\hcal^\op,\scal_*))^\otimes$ generated by $\hcal$ under finite limits and colimits. This subcategory is equivalent to $\mathrm{SW}(\mc{P}^\mathrm{rex}_\Sigma(\hcal))^\otimes$.
\end{proof}
\begin{constr}\label{filrealreal} Let $\F$ be a field. Consider the symmetric monoidal (ordinary) category $\mathrm{Fil}(\mathrm{C}^b(\F))^\otimes$ of filtered bounded chain complexes of vector spaces over $F$ with a bounded filtration; its the monoidal product is given by 
\[F_n(C\otimes D):=\sum\limits_{i+j=n}F_i(C)\otimes F_j(D)\subseteq C\otimes D.\]
The \emph{filtered derived category} $\D^b\mathrm{F}(\F)^\otimes$ defined as the symmetric monoidal $\infty$-categorical localization (which exists thanks to \cite[Theorem~1.2.1]{hinich}) of $\mathrm{Fil}(\mathrm{C}^b(F))^\otimes$ at quasi-isomorphisms i.e. morphism which induce an isomorphism on the $\mathrm{E}_1$-page of the spectral sequence associated to the filtered complex (here we use the conventions of \cite[Section~1.1]{mccrory1} for filtered chain complexes). 

Any bounded chain complex can be endowed with the canonical filtration \cite[§~1.4.6]{hodgeII}. We claim that this can be promoted into a map in $\mathrm{M}\catinfty^\Sigma$ 
\[F^\mathrm{can}\colon \D^b(\F)^\otimes\to \D^b\mathrm{F}(\F)^\otimes.\]
Indeed, the inclusions 
\[\theta_n\colon \sum\limits_{i+j=n}F^\mathrm{can}_i(C)\otimes F_j^\mathrm{can}(D)\subseteq F^\mathrm{can}_n(C\otimes D)\]
for $n\in \Z$ yield a lax-monoidal functor $\mathrm{C}^b(\F)^\otimes\to \mathrm{Fil}(\mathrm{C}^b(\F))^\otimes$ which sends qusi-isomorphisms to quasi-isomorphisms and thus yield a lax monoidal functor $F^\mathrm{can}$ on the localizations by \cite[Theorem~1.2.1]{hinich}; because of the Künneth formula, the $\theta_n$s induce quasi-isomorphisms on the associated graded complexes (ans therefore on the $E_1$-page of the associated spectral sequences) and hence $F^\mathrm{can}$ is indeed symmetric monoidal.

On the other hand, using \cite[Theorem~1.2.1]{hinich} again, the forgetful functor can be promoted into a map in $\mathrm{M}\catinfty^\mathrm{rex}$ 
\[\varphi\colon \D^b\mathrm{F}(\F_2)^\otimes\to \D^b(\F)^\otimes\]
and the functor $\varphi\circ i$ is the identity of $\D^b(\F)^\otimes$ (this can be checked at the level of chain complexes).

We will use this for $\F=\F_2$. Restricting $F^\mathrm{can}\circ \rho_{\R}$ to $\mathrm{Chow}_\infty(\R,\F_2)^\otimes$ yields a map in $\mathrm{MCat}_\infty^\Sigma$, which can be extended by \Cref{aokiji} to a map 
\[\rho_\R^\mathcal{W}\colon\mathrm{DM}_\mathrm{gm}(\R,\F_2)^\otimes\to \D^b\mathrm{F}(\F_2)^\otimes\]
in $\mathrm{M}\catinfty^\mathrm{rex}$ such that $\varphi\circ \rho_\R^\mathcal{W}=\rho_\R.$ We call $\rho_\R^\mathcal{W}$ the \emph{filtered real realization}.
\end{constr}
We can now finally define the weight filtrations on real homology, cohomology, Borel-Moore homology and cohomology with compact support.

\begin{corollary}\label{mainthm}
Set $WC_*\coloneqq\rho_\R^\mc{W}\circ M$, $WC_*^\mathrm{BM}\coloneqq\rho_\R^\mc{W}\circ M^\mathrm{BM}$, $WC^*\coloneqq\rho_\R^\mc{W}\circ h$ and $WC^*_c\coloneqq\rho_\R^\mc{W}\circ h_c$. These functors refine homology, Borel-Moore homology, cohomology and cohomology with compact support of the real locus and yield weight spectral sequences that converge to those cohomology theories. Furthermore, \begin{enumerate}
        \item if $p\colon X'\to X$ is proper birational and $Z\to X$ is a closed embedding such that $p^{-1}(X\setminus Z)\to X\setminus Z$ is an isomorphism, we have canonical exact triangles:
    \[WC_*(p^{-1}(Z))\to WC_*(X')\oplus WC_*(Z)\to WC_*(X).\]
    \[WC_*^\mathrm{BM}(p^{-1}(Z))\to WC_*^\mathrm{BM}(X')\oplus WC_*^\mathrm{BM}(Z)\to WC_*^\mathrm{BM}(X)\]
    in $\D^b\mathrm{F}(\F_2)$.
    \item if $Z\to X$ is a closed immersion, we have a canonical exact triangle 
    \[WC_*^\mathrm{BM}(Z)\to WC_*^\mathrm{BM}(X)\to WC_*^\mathrm{BM}(U)\]
    in $\D^b\mathrm{F}(\F_2)$.
    \end{enumerate}
Finally, our functors $WC_*$ and $WC_*^\mathrm{BM}$are $\infty$-categorical enhancements of those of \cite{mccrory1,mccrory2}.
\end{corollary}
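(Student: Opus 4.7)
The plan is to break the corollary into three pieces: identification of the underlying (co)homology theories and the associated weight spectral sequences, derivation of the two families of exact triangles, and comparison with the McCrory--Parusi{\'n}ski constructions.

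For the first claim, \Cref{filrealreal} gives $\mathrm{Tot}\circ\rho_\R^\mc{W}=\rho_\R$, so composing with $M$, $M^\mathrm{BM}$, $h$ and $h^c$ and invoking the previous proposition identifies the totalizations of $W\HH_*$, $W\HH_*^\mathrm{BM}$, $W\HH^*$ and $W\HH^*_c$ with the usual singular (co)chain complexes of the real locus. The weight spectral sequence is then the standard filtration spectral sequence attached to a bounded chain complex in $\mathrm{GrVect}_{\F_2}$: the filtration of the total complex by columns converges to the homology of the totalization, which by the previous sentence is the desired (co)homology theory.

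The triangles in (1) and (2) follow formally. In $\DM_\mathrm{gm}(\R,\F_2)$, blow-up squares yield exact triangles on $M$ and $M^\mathrm{BM}$ by \cite[Chapter 5, Proposition 4.1.3]{orange}, and the localization triangle on $M^\mathrm{BM}$ is \cite[Chapter 5, Proposition 4.1.5]{orange} (both were already invoked in the proof of the previous proposition). Since $\rho_\R^\mc{W}$ is a morphism in $\rmm\catinfty^\mathrm{st}$, hence an exact functor of stable $\infty$-categories, it preserves these triangles, producing the asserted triangles in $\mathrm{K}^b(\mathrm{GrVect}_{\F_2})$.

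The hard part will be the comparison with \cite{mccrory1,mccrory2}. In the Borel--Moore case, McCrory--Parusi{\'n}ski's filtration is characterized via the Guill{\'e}n--Navarro Aznar extension theorem as the unique extension to $\Sch_\R$ of the trivially filtered complexes on smooth projective $\R$-varieties satisfying the blow-up square axiom. On a smooth projective $X$, the Chow motive $M(X)$ is pure of weight zero, so $\rho_\R^\mc{W}(M(X))$ lies in a single column of $\mathrm{K}^b(\mathrm{GrVect}_{\F_2})$, matching the trivial filtration; combined with the blow-up triangles from (1) and (2), uniqueness identifies our filtration with that of \cite{mccrory1}. For ordinary homology \cite{mccrory2}, one either carries out the same argument with the dual axiomatization satisfied by $W\HH_*$ in place of Gysin maps, or transports the identification from $W\HH^*_c$ via Poincar{\'e} duality. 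The main subtlety is to verify precisely which axiomatic characterization is satisfied by our construction and to check the normalization condition on the class of smooth projective varieties used in \cite{mccrory1,mccrory2}.
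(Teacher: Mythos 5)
Your proposal is correct and follows essentially the same route as the paper: the triangles come from the corresponding motivic properties of $M$ and $M^{\mathrm{BM}}$ (Voevodsky's blow-up and localization triangles) preserved by the exact functor $\rho_\R^{\mathcal{W}}$, and the comparison with \cite{mccrory1,mccrory2} rests on their uniqueness theorems, with the normalization coming from purity of Chow motives of smooth projective varieties and the ordinary-homology case handled via Poincaré duality and the uniqueness result of \cite{mccrory2}. You spell out the normalization step (a single column in $\mathrm{K}^b(\mathrm{GrVect}_{\F_2})$) slightly more explicitly than the paper, which simply cites the uniqueness statements.
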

\begin{proof}
Both properties follow from the corresponding properties of $M$ and $M^\mathrm{BM}$ from \cite[Chapter~5, Propositions~4.1.3 \&~4.1.5]{orange}. The fact that our construction enhances that of \cite{mccrory1,mccrory2} comes from the uniqueness result in \cite[Theorem 1.1]{mccrory1} for Borel-Moore homology. In the case of a smooth scheme, the uniqueness for ordinary homology is obtained by Poincaré duality at the level of filtered complexes which is \cite[Theorem 2.4]{mccrory2}. In the general case, it follows from the uniqueness result of \cite[Theorem 7.1]{mccrory2}.
\end{proof}

	\bibliographystyle{alpha}
	\bibliography{biblio.bib}

\newcommand{\etalchar}[1]{$^{#1}$}
\begin{thebibliography}{BCKW19}

\bibitem[Aok20]{aoki}
Ko~Aoki.
\newblock The weight complex functor is symmetric monoidal.
\newblock {\em Adv. Math.}, 368:9, 2020.
\newblock Id/No 107145.

\bibitem[Ara75]{arason}
J{\'o}n~Kr. Arason.
\newblock Cohomologische {Invarianten} quadratischer {Formen}.
\newblock {\em J. Algebra}, 36:448--491, 1975.

\bibitem[Ayo10]{ayobetti}
Joseph Ayoub.
\newblock Notes on {Grothendieck}'s operations and {Betti} realizations.
\newblock {\em J. Inst. Math. Jussieu}, 9(2):225--263, 2010.

\bibitem[Ayo14]{ayo14}
Joseph Ayoub.
\newblock La r{\'e}alisation {\'e}tale et les op{\'e}rations de grothendieck.
\newblock {\em Annales {S}cientifiques de l'{\'E}cole {N}ormale
  {S}up{\'e}rieure}, 47:1--145, 2014.

\bibitem[BC20]{cisinski-bunke2}
Ulrich Bunke and Denis-Charles Cisinski.
\newblock A universal coarse {{\(K\)}}-theory.
\newblock {\em New York J. Math.}, 26:1--27, 2020.

\bibitem[BCD{\etalchar{+}}25]{MWmotives}
Tom Bachmann, Baptiste Calm{\`e}s, Fr{\'e}d{\'e}ric D{\'e}glise, Jean Fasel,
  and Paul~Arne {\O}stv{\ae}r.
\newblock {\em Milnor-Witt motives}, volume 1572 of {\em Mem. Am. Math. Soc.}
\newblock Providence, RI: American Mathematical Society (AMS), 2025.

\bibitem[BCKW19]{cisinski-bunke}
Ulrich Bunke, Denis-Charles Cisinski, Daniel Kasprowski, and Christoph Winges.
\newblock Controlled objects in left-exact {$\infty$}-categories and the
  novikov conjecture.
\newblock Available at \url{https://arxiv.org/abs/1911.02338}, 2019.

\bibitem[BGV24]{bgv}
Federico Binda, Martin Gallauer, and Alberto Vezzani.
\newblock Motivic monodromy and p-adic cohomology theories.
\newblock Preprint, available at \url{https://arxiv.org/abs/2306.05099}, 2024.

\bibitem[Bon11]{bondarkoZ[1/p]-motivic}
Mikhail~V. Bondarko.
\newblock {$\mathbb{Z}[\frac{1}{p}]$}-motivic resolution of singularities.
\newblock {\em Compos. Math.}, 147(5):1434--1446, 2011.

\bibitem[CD16]{em}
Denis-Charles Cisinski and Fr{\'e}d{\'e}ric D{\'e}glise.
\newblock {\'E}tale motives.
\newblock {\em Compositio Mathematica}, 152(3):277--427, 2016.

\bibitem[D{\'e}g02]{thesedeglise}
Fr{\'e}d{\'e}ric D{\'e}glise.
\newblock {\em Modules homotopiques avec transferts et motifs
  g{\'e}n{\'e}riques}.
\newblock PhD thesis, Universit{\'e} Paris 7, 2002.

\bibitem[Del71]{hodgeII}
Pierre Deligne.
\newblock Th{\'e}orie de {Hodge}. {II}. ({Hodge} theory. {II}).
\newblock {\em Publ. Math., Inst. Hautes {\'E}tud. Sci.}, 40:5--57, 1971.

\bibitem[Del74]{hodgeIII}
Pierre Deligne.
\newblock Th{\'e}orie de {Hodge}. {III}.
\newblock {\em Publ. Math., Inst. Hautes {\'E}tud. Sci.}, 44:5--77, 1974.

\bibitem[Del80]{WeilII}
Pierre Deligne.
\newblock La conjecture de {Weil}. {II}.
\newblock {\em Publ. Math., Inst. Hautes {\'E}tud. Sci.}, 52:137--252, 1980.

\bibitem[Fu25]{furealreal}
Lie Fu.
\newblock Maximal real varieties from moduli constructions.
\newblock {\em Moduli}, 2:41, 2025.
\newblock Id/No e8.

\bibitem[GNA02]{Guillen-NA}
Francisco Guill{\'e}n and Vicente Navarro~Aznar.
\newblock An extension criterion for functors defined on smooth schemes.
\newblock {\em Publ. Math., Inst. Hautes {\'E}tud. Sci.}, 95:1--91, 2002.

\bibitem[HM25]{hinich}
Vladimir Hinich and Ieke Moerdijk.
\newblock {A}ddendum: localization of lax symmetric monoidal categories, 2025.
\newblock Available at \url{https://arxiv.org/abs/2509.03450}.

\bibitem[Jac17]{jacobson}
Jeremy Jacobson.
\newblock Real cohomology and the powers of the fundamental ideal in the {Witt}
  ring.
\newblock {\em Ann. \(K\)-Theory}, 2(3):357--385, 2017.

\bibitem[JX25]{jinxie}
Fangzhou Jin and Heng Xie.
\newblock On the real cycle class map for singular varieties.
\newblock {\em J. Homotopy Relat. Struct.}, 20(2):293--321, 2025.

\bibitem[Kel13]{thesekelly}
Shane Kelly.
\newblock Triangulated categories of motives in positive characteristic.
\newblock Preprint, Available at \url{https://arxiv.org/abs/1305.5349}, 2013.

\bibitem[KMS07]{milnorchowrevisited}
Moritz Kerz and Stefan M{\"u}ller-Stach.
\newblock The {Milnor}-{Chow} homomorphism revisited.
\newblock {\em \(K\)-Theory}, 38(1):49--58, 2007.

\bibitem[Lur17]{ha}
Jacob Lurie.
\newblock Higher algebra, 2017.
\newblock Available at \url{https://www.math.ias.edu/~lurie/papers/HA.pdf}.

\bibitem[MP11]{mccrory1}
Clint McCrory and Adam Parusi{\'n}ski.
\newblock The weight filtration for real algebraic varieties.
\newblock In {\em Topology of stratified spaces. Based on lectures given at the
  workshop, Berkeley, CA, USA, September 8--12, 2008}, pages 121--160.
  Cambridge: Cambridge University Press, 2011.

\bibitem[MP14]{mccrory2}
Clint McCrory and Adam Parusi{\'n}ski.
\newblock The weight filtration for real algebraic varieties. {II}: {Classical}
  homology.
\newblock {\em Rev. R. Acad. Cienc. Exactas F{\'{\i}}s. Nat., Ser. A Mat.,
  RACSAM}, 108(1):63--94, 2014.

\bibitem[OVV07]{ovv}
Dmitry Orlov, Alexander Vishik, and Vladimir Voevodsky.
\newblock An exact sequence for {{\(K^M_*/2\)}} with applications to quadratic
  forms.
\newblock {\em Ann. Math. (2)}, 165(1):1--13, 2007.

\bibitem[Pst23]{Pstragowski}
Piotr Pstr{\k{a}}gowski.
\newblock Synthetic spectra and the cellular motivic category.
\newblock {\em Invent. Math.}, 232(2):553--681, 2023.

\bibitem[Rob12]{rob2}
Marco Robalo.
\newblock Noncommutative {Motives} {I}: {A} {Universal} {Characterization} of
  the {Motivic} {Stable} {Homotopy} {Theory} of {Schemes}.
\newblock Preprint, available at \url{https://arxiv.org/abs/1206.3645}, 2012.

\bibitem[Ros96]{rost}
Markus Rost.
\newblock Chow groups with coefficients.
\newblock {\em Doc. Math.}, 1:319--393, 1996.

\bibitem[Sos19]{sosnilonegativeKtheory}
Vladimir Sosnilo.
\newblock Theorem of the heart in negative {{\(K\)}}-theory for weight
  structures.
\newblock {\em Doc. Math.}, 24:2137--2158, 2019.

\bibitem[Tot02]{totaro}
B.~Totaro.
\newblock Topology of singular algebraic varieties.
\newblock In {\em Proceedings of the international congress of mathematicians,
  ICM 2002, Beijing, China, August 20--28, 2002. Vol. II: Invited lectures},
  pages 533--541. Beijing: Higher Education Press; Singapore: World
  Scientific/distributor, 2002.

\bibitem[VSF11]{orange}
Vladimir Voevodsky, Andrei Suslin, and Eric~M. Friedlander.
\newblock {\em Cycles, Transfers, and Motivic Homology Theories.(AM-143),
  Volume 143}.
\newblock Princeton university press, 2011.

\end{thebibliography}

\end{document}